\newcommand{\iid}{i.i.d}
\newcommand{\rv}{r.v}
\newcommand{\mat}[1]{\ensuremath{{\bf #1}}}
\DeclareMathOperator{\tr}{tr}
\DeclareMathOperator{\var}{Var}
\DeclareMathOperator{\scale}{scale}
\DeclareMathOperator*{\extrema}{extrema}
\newcommand{\effrank}{\ensuremath{r_{\mathrm{eff}}}}
\newcommand{\stabrank}{\ensuremath{r_{\mathrm{stab}}}}
\newcommand{\effshape}{\ensuremath{\alpha_{\mathrm{eff}}}}
\newcommand{\ddt}{\frac{d}{dt}}
\newcommand{\prob}{\mathrm{Pr}}
\newcommand{\qlam}{\ensuremath{Q_{\asvector{\lambda}}}}
\newcommand{\qmu}{\ensuremath{Q_{\asvector{\mu}}}}
\newcommand{\arel}{\ensuremath{\mathcal{A}_{\mathrm{rel}}}}
\newcommand{\aabs}{\ensuremath{\mathcal{A}_{\mathrm{abs}}}}
\newcommand{\qrel}{\ensuremath{\mathcal{Q}_{\mathrm{rel}}}}
\newcommand{\qabs}{\ensuremath{\mathcal{Q}_{\mathrm{abs}}}}
\newcommand{\precabs}{\ensuremath{\preceq_{F}}}
\newcommand{\xupper}{\overline{x}_{\mathrm{upper}}}
\newcommand{\xlower}{\overline{x}_{\mathrm{lower}}}
\newcommand{\xhatupper}{\hat{x}_{\mathrm{upper}}}
\newcommand{\asvector}[1]{\ensuremath{\boldsymbol{#1}}}
\newcommand{\simiid}{\stackrel{\text{i.i.d.}}{\sim}}
\newcounter{parentnumber}
\newtheorem{theorem}{Theorem}
\newtheorem{lemma}{Lemma}
\newtheorem{definition}{Definition}
\newtheorem{corollary}{Corollary}
\newtheorem{conjecture}{Conjecture}
\theoremstyle{remark}
\title{Extremal bounds for Gaussian trace estimation}
\author{Eric Hallman}
\date{\today}
\begin{document}
	\maketitle

    \begin{changemargin}{1cm}{1cm}
    \begingroup
    \footnotesize
    {\bf Abstract}: This work derives extremal tail bounds for the Gaussian trace estimator applied to a real symmetric matrix. We define a partial ordering on the eigenvalues, so that when a matrix has greater spectrum under this ordering, its estimator will have worse tail bounds. This is done for two families of matrices: positive semidefinite matrices with bounded effective rank, and indefinite matrices with bounded 2-norm and fixed Frobenius norm. In each case, the tail region is defined rigorously and is constant for a given family.
    \endgroup
    \end{changemargin}

	\section{Introduction}
    Let $\mat{A}\in \mathbb{R}^{n\times n}$ be a symmetric matrix with eigenvalues $\lambda_1\geq \ldots\geq \lambda_n$, and let $\mat{z}_1,\ldots,\mat{z}_m\in \mathbb{R}^n$ be \iid~standard Gaussian vectors. The {\em Gaussian trace estimator} is given by
	\begin{equation}\label{def:G}
		\tr(\mat{A}) \approx \tr_m^G(\mat{A}) := \frac{1}{m}\sum_{j=1}^m\mat{z}_j^T\mat{A}\mat{z}_j.
	\end{equation}
    It is well known that this estimator is unbiased and has variance $\tfrac{2}{m}\|\mat{A}\|_F^2$.
 
    Often, it is useful to know how many samples $m$ are needed to produce an estimate that satisfies a given error tolerance. Cortinovis and Kressner \cite{cortinovis2021indefinite} have used concentration inequalities to derive good results on this problem, with slight improvements by Persson, Cortinovis, and Kressner in \cite{persson2022improvedvariantshutchalgorithm}.

    \begin{theorem}[\cite{cortinovis2021indefinite}, Thm.~1]\label{thm:cort_kress_abs}
        Let $\mat{A}\in \mathbb{R}^{n\times n}$ be symmetric. Then for all $\varepsilon > 0$, 
        \begin{equation*}
            \prob\left(|\tr_m^G(\mat{A}) - \tr(\mat{A})| \geq \varepsilon\right) \leq 2\exp \left( -\frac{m\varepsilon^2}{4\|\mat{A}\|_F^2 + 4\varepsilon\|\mat{A}\|_2}\right).
        \end{equation*}
    \end{theorem}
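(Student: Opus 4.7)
The plan is a standard Chernoff/Bernstein argument applied to the quadratic form $\mat{z}^T\mat{A}\mat{z}$, following the recipe: diagonalize, compute the moment generating function exactly, bound it by a simple rational function, and then optimize the Chernoff parameter.

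First I would exploit orthogonal invariance of the Gaussian: writing $\mat{A} = \mat{U}\asvector{\Lambda}\mat{U}^T$, each $\mat{z}_j^T\mat{A}\mat{z}_j$ has the same distribution as $\sum_{i=1}^n \lambda_i g_{ij}^2$ with $g_{ij}\simiid \mathcal{N}(0,1)$. Setting $X_j := \mat{z}_j^T\mat{A}\mat{z}_j - \tr(\mat{A}) \stackrel{d}{=} \sum_i \lambda_i(g_{ij}^2 - 1)$ and using the chi-squared MGF, for $s$ with $2s\lambda_i < 1$ for all $i$,
\[
    \E\!\left[e^{sX_j}\right] = \prod_{i=1}^n (1-2s\lambda_i)^{-1/2}e^{-s\lambda_i}.
\]
I would then apply the elementary inequality $-\tfrac12\ln(1-x) - \tfrac{x}{2} \leq \tfrac{x^2/4}{1-x}$ for $x<1$ with $x=2s\lambda_i$, and restrict $s$ so that $2s\|\mat{A}\|_2 < 1$. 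This gives the clean cumulant bound
\[
    \ln\E\!\left[e^{sX_j}\right] \leq \sum_i \frac{s^2\lambda_i^2}{1-2s\lambda_i} \leq \frac{s^2\|\mat{A}\|_F^2}{1-2s\|\mat{A}\|_2}.
\]

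Next, using independence of the samples and Markov's inequality,
\[
    \prob\!\left(\tr_m^G(\mat{A}) - \tr(\mat{A}) \geq \varepsilon\right) = \prob\!\left(\textstyle\sum_j X_j \geq m\varepsilon\right) \leq \exp\!\left(-sm\varepsilon + \frac{ms^2\|\mat{A}\|_F^2}{1-2s\|\mat{A}\|_2}\right).
\]
The key algebraic step, which is the most finicky part of the proof, is choosing $s$ to make this exponent match the stated bound. The right choice is $s = \varepsilon/\bigl(2\|\mat{A}\|_F^2 + 2\varepsilon\|\mat{A}\|_2\bigr)$: this substitution makes $1 - 2s\|\mat{A}\|_2 = \|\mat{A}\|_F^2/(\|\mat{A}\|_F^2 + \varepsilon\|\mat{A}\|_2)$, collapses the rational term to $s^2(\|\mat{A}\|_F^2+\varepsilon\|\mat{A}\|_2)$, and after simplification yields the exponent $-m\varepsilon^2/(4\|\mat{A}\|_F^2 + 4\varepsilon\|\mat{A}\|_2)$.

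Finally, to capture the two-sided bound, I would repeat the argument for the lower tail by applying it to $-\mat{A}$ (noting $\|-\mat{A}\|_F = \|\mat{A}\|_F$ and $\|-\mat{A}\|_2 = \|\mat{A}\|_2$) and conclude with a union bound, which contributes the prefactor of $2$. The main obstacle is not conceptual but purely the bookkeeping of the optimization; the rest is the classical Hanson--Wright-style/Bernstein recipe for Gaussian chaos of order two.
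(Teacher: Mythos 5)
This theorem is imported from Cortinovis and Kressner and the paper gives no proof of it, so the only question is whether your argument is sound. Your overall strategy \textemdash{} diagonalize, compute the exact MGF of the centered quadratic form, bound the cumulant by a Bernstein-type rational function, and optimize the Chernoff parameter with the choice $s = \varepsilon/(2\|\mat{A}\|_F^2 + 2\varepsilon\|\mat{A}\|_2)$ \textemdash{} is exactly the standard route, and your algebra in the optimization step checks out: with that $s$ one indeed gets $1-2s\|\mat{A}\|_2 = \|\mat{A}\|_F^2/(\|\mat{A}\|_F^2+\varepsilon\|\mat{A}\|_2)$ and the exponent $-m\varepsilon^2/(4\|\mat{A}\|_F^2+4\varepsilon\|\mat{A}\|_2)$. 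The union bound for the factor of $2$ is also fine.

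There is one genuine flaw in an intermediate step. The elementary inequality you invoke,
\begin{equation*}
    -\tfrac{1}{2}\ln(1-x) - \tfrac{x}{2} \leq \frac{x^2/4}{1-x},
\end{equation*}
is \emph{false} for $x<0$: at $x=-1$ the left side is $\tfrac12 - \tfrac12\ln 2 \approx 0.153$ while the right side is $1/8$. (One can check that the difference of the two sides has derivative $x^2/(4(1-x)^2)\geq 0$ and vanishes at $x=0$, so the inequality reverses on $x<0$.) This matters precisely because the theorem is about indefinite $\mat{A}$, where $x = 2s\lambda_i$ is negative for negative eigenvalues. The repair is standard: use instead
\begin{equation*}
    -\tfrac{1}{2}\ln(1-x) - \tfrac{x}{2} = \sum_{k\geq 2}\frac{x^k}{2k} \leq \sum_{k\geq 2}\frac{|x|^k}{4} = \frac{x^2}{4(1-|x|)}, \qquad |x|<1,
\end{equation*}
which with $|x| = 2s|\lambda_i| \leq 2s\|\mat{A}\|_2$ still yields your displayed cumulant bound $\ln\E[e^{sX_j}] \leq s^2\|\mat{A}\|_F^2/(1-2s\|\mat{A}\|_2)$. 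With that one-line correction the proof is complete and matches the cited source's argument.
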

    For nonzero symmetric positive semidefinite (SPSD) matrices, this result can be turned into a relative error estimate. The trace estimator will generally be more accurate on matrices with tightly clustered eigenvalues. 
    \begin{definition}
        The {\em effective rank} of a nonzero SPSD matrix $\mat{A}$ is 
        \begin{equation*}
            \effrank(\mat{A}) := \frac{\tr(\mat{A})}{\|\mat{A}\|_2}.
        \end{equation*}
    \end{definition}

    \begin{corollary}[\cite{cortinovis2021indefinite}, Remark 2]\label{cor:cort_kress_rel}
        For nonzero SPSD $\mat{A}\in \mathbb{R}^{n\times n}$, replace $\varepsilon$ by $\varepsilon\cdot \tr(\mat{A})$ in Theorem \ref{cor:cort_kress_rel}, and note that $\|\mat{A}\|_F^2/\tr(\mat{A})^2 \leq \effrank(\mat{A})^{-1}$. For $\varepsilon > 0$, 
        \begin{equation*}
            \prob\left(|\tr_m^G(\mat{A}) - \tr(\mat{A})| \geq \varepsilon\cdot \tr(\mat{A})\right) \leq 2\exp \left( -\frac{m \varepsilon^2\cdot \effrank(\mat{A})}{4(1+\varepsilon)}\right).
        \end{equation*}
    \end{corollary}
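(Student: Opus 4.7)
The plan is to apply Theorem~\ref{thm:cort_kress_abs} with the substitution $\varepsilon \mapsto \varepsilon\cdot\tr(\mat{A})$ (permissible since $\tr(\mat{A})>0$ for nonzero SPSD $\mat{A}$), and then simplify the resulting exponent using the definition of effective rank together with one elementary spectral inequality. This corollary is essentially an algebraic rearrangement of an already-proven bound, so I do not expect any genuine obstacle.

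After the substitution the exponent reads
\[
-\frac{m\varepsilon^2\,\tr(\mat{A})^2}{4\|\mat{A}\|_F^2 + 4\varepsilon\,\tr(\mat{A})\,\|\mat{A}\|_2}.
\]
Dividing numerator and denominator by $\tr(\mat{A})^2$ gives
\[
-\frac{m\varepsilon^2}{4\|\mat{A}\|_F^2/\tr(\mat{A})^2 + 4\varepsilon\,\|\mat{A}\|_2/\tr(\mat{A})}.
\]
The second denominator term is exactly $4\varepsilon/\effrank(\mat{A})$ by definition. For the first term I would establish the inequality $\|\mat{A}\|_F^2/\tr(\mat{A})^2 \leq 1/\effrank(\mat{A})$, equivalently $\|\mat{A}\|_F^2 \leq \|\mat{A}\|_2\,\tr(\mat{A})$: for SPSD $\mat{A}$ with eigenvalues $\lambda_i \geq 0$, this follows at once from $\sum_i \lambda_i^2 \leq \lambda_1 \sum_i \lambda_i$. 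Combining the two estimates bounds the denominator above by $4(1+\varepsilon)/\effrank(\mat{A})$, and hence the exponent above by $-m\varepsilon^2\,\effrank(\mat{A})/(4(1+\varepsilon))$.

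The only point worth checking carefully is the direction of the inequality after bounding the denominator: enlarging the (positive) denominator enlarges the fraction, which makes the negative exponent more negative and therefore makes the exponential smaller — the correct direction to preserve an upper bound on probability. Applying monotonicity of $\exp$ and keeping the leading factor of $2$ from Theorem~\ref{thm:cort_kress_abs} then yields the stated bound.
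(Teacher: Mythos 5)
Your proof is correct and is exactly the paper's intended argument: the corollary's own statement is essentially its proof (substitute $\varepsilon \mapsto \varepsilon\cdot\tr(\mat{A})$ into Theorem~\ref{thm:cort_kress_abs}, use $\|\mat{A}\|_2/\tr(\mat{A}) = \effrank(\mat{A})^{-1}$ and $\|\mat{A}\|_F^2 \leq \|\mat{A}\|_2\,\tr(\mat{A})$ for SPSD matrices), and your displayed inequality chain carries this out correctly. One caution: your closing sanity check states the monotonicity backwards --- enlarging the positive denominator makes $m\varepsilon^2/D$ \emph{smaller} and hence the exponential \emph{larger}, which is still the valid direction, since one may always replace an upper bound by a weaker (larger) one; the reasoning as written (``makes the exponential smaller'') would, if taken literally, not preserve the bound.
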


    The goal of this paper is to tighten these bounds as much as possible using techniques from \cite{szekely2003gaussian, roosta2015gamma, roosta2015schur}. The practical benefit may be marginal, since the above bounds are already quite good. Still, I found the techniques interesting.

    \subsection{Extremal bounds}

    Words like {\em tight} and {\em optimal} can be slippery. To be precise, I mean to find {\em extremal} tail probabilities: ones that can be expressed as the supremum or infimum over some set. The error bounds of Theorem \ref{thm:cort_kress_abs} and Corollary \ref{cor:cort_kress_rel} use only certain information about $\mat{A}$; respectively, they use $(\|\mat{A}\|_2, \|\mat{A}\|_F)$ and $\effrank(\mat{A})$. It is therefore worth considering the sets of all matrices with the same summary statistics:
    \begin{align*}
        \aabs(\lambda, \phi) &:= \{\mat{A}\in \mathcal{S}_{\infty}\,:\,\|\mat{A}\|_2\leq \lambda,\, \|\mat{A}\|_F = \phi\},\quad 0 < \lambda \leq \phi, \\
        \arel(\mu) &:= \{\mat{A}\in \mathcal{S}_{\infty}^+\,:\, \|\mat{A}\|_2 \leq \tfrac{1}{\mu},\, \tr(\mat{A}) = 1\},\quad 1 \leq \mu,
    \end{align*}
    where $\mathcal{S}_\infty$ is the set of all symmetric matrices of any dimension, and $\mathcal{S}_\infty^+$ is the set of all SPSD matrices of any dimension. For $\arel(\mu)$, the parameter $\mu$ is a lower bound on the effective rank of $\mat{A}$; we can fix $\tr(\mat{A})$ without loss of generality since the relative error of $\tr_m^G(\mat{A})$ is scale invariant.

    For each of the above sets $\mathcal{A}$, we define a partial ordering:
    \begin{itemize}
        \item For $\arel(\mu)$, the partial ordering is the vector majorization order on the eigenvalues. This paper will show that $\asvector{\lambda}_{\mat{A}}\preceq \asvector{\lambda}_{\mat{B}}$ implies that $\tr_m^G(\mat{B})$ has worse upper and lower tail bounds than $\tr_m^G(\mat{A})$.
        \item For $\aabs(\lambda, \phi)$, the partial ordering is related to the majorization order but is a little more complicated. This paper will show that $\asvector{\lambda}_{\mat{A}}\preceq \asvector{\lambda}_{\mat{B}}$ implies that $\tr_m^G(\mat{B})$ has worse upper tail bounds {\em and} that $\tr_m^G(\mat{A})$ has worse lower tail bounds.
    \end{itemize}

    With these partial orderings, $\arel(\mu)$ has a maximum element and $\aabs(\lambda, \phi)$ has both maximum and minimum elements.\footnote{Strictly speaking, the maximum is unique only if we treat matrices as equivalent when they have the same nonzero eigenvalues.} 

    \subsection{The tail}
    It is not particularly useful to show that if $\asvector{\lambda}_{\mat{A}}\preceq \asvector{\lambda}_{\mat{B}}$ then one trace estimate {\em eventually} has worse tail bounds than the other\textemdash this much can already be deduced just by comparing the asymptotic behavior of the probability distributions.

    The critical result, then, is that we can define the ``tail'' regions so that their value depends {\em only on $\mathcal{A}$}. For error tolerances within these regions, the partial ordering works as advertised on all elements of $\mathcal{A}$.

    Unfortunately, it has so far been beyond my ability to prove exactly where these tail regions begin. This paper contains some pessimistic bounds, but otherwise is restricted to conjecture.

    \subsection{The extremal matrices}
    For the relative error, the matrix with the worst tail bounds is
    \begin{equation}\label{def:worst_case_A_rel}
        \mat{A}_{\mathrm{rel}}(\mu) = \frac{1}{\mu}\begin{bmatrix}
            \mat{I}_{\lfloor\mu\rfloor} & \mat{0} & \mat{0} \\
            \mat{0} & \mu - \lfloor \mu \rfloor & \mat{0} \\
            \mat{0} & \mat{0} & \mat{0}
        \end{bmatrix}.
    \end{equation}
    For the absolute error, the matrix with the worst upper tail bounds is
    \begin{equation}\label{def:worst_case_A_abs}
         \mat{A}_{\mathrm{abs}}(\lambda, \phi) = \lambda \begin{bmatrix}
             \mat{I}_{\lfloor\rho\rfloor} & \mat{0} & \mat{0} \\
            \mat{0} & \sqrt{\rho - \lfloor \rho \rfloor} & \mat{0} \\
            \mat{0} & \mat{0} & \mat{0}
        \end{bmatrix}, \quad \rho := \frac{\phi^2}{\lambda^2},
    \end{equation}
    where $\rho$ is an upper bound on the stable rank of $\mat{A}$. By symmetry, the matrix with the worst lower tail bounds is $-\mat{A}_{\mathrm{abs}}(\lambda, \phi)$.
    \begin{definition}
        The {\em stable rank} of a nonzero matrix $\mat{A}$ is
        \begin{equation*}
            \stabrank(\mat{A}) := \frac{\|\mat{A}\|_F^2}{\|\mat{A}\|_2^2}.
        \end{equation*}
    \end{definition}

    \subsubsection{Extension to Gamma random variables}
    When $\mu$ and $\rho$ are not integers, the distributions of the trace estimators for \eqref{def:worst_case_A_rel} and \eqref{def:worst_case_A_abs} are not quite as elegant as I would have liked them to be. We can, however, relax the problem by considering more general linear combinations of Gamma random variables, as opposed to only those whose distribution can be expressed as the Gaussian trace estimate of some matrix as in \eqref{def:G}.
    
    For distribution families $\qrel$ and $\qabs$ to be defined later, we will show that
    \begin{equation}\label{eqn:extrema_qrel}
        \max\, \qrel \sim Gamma\left(\frac{m\mu}{2}, \frac{m\mu}{2}\right)
    \end{equation}
    and
    \begin{equation}\label{eqn:extrema_qabs}
        \extrema\, \qabs = \pm \lambda Q,\quad Q\sim Gamma\left(\frac{m\rho}{2}, \frac{m}{2}\right),\,\rho := \frac{\phi^2}{\lambda^2}.
    \end{equation}
    When $\mu$ and $\rho$ are integers, these are exactly the distributions of the trace estimators $\tr_m^G(\mat{A}_{\mathrm{rel}})$ and $\tr_m^G(\mat{A}_{\mathrm{abs}})$ from \eqref{def:worst_case_A_rel} and \eqref{def:worst_case_A_abs}.

    \subsection{Related work}

    This paper is primarily a spiritual successor to the works \cite{szekely2003gaussian, roosta2015gamma, roosta2015schur}. Sz\'{e}kely \cite{szekely2003gaussian} gives thorough tail bounds for the relative error when $Q$ is a nonnegative sum of chi-squared \rv's with no further restrictions (the worst case is typically $Q\sim \chi_1^2$) and provides a majorization result as a corollary. Roosta-Khorasani, Sz\'{e}kely, and Ascher \cite{roosta2015gamma} extend the main result to where $Q$ is a nonnegative sum of Gamma \rv's with arbitrary shape and scale parameters. This allows them to consider the effects of using a larger sampling number $m$ for trace estimation. Finally, Roosta-Khorasani and Sz\'{e}kely \cite{roosta2015schur} generalize the worst-case bounds of \cite{roosta2015gamma} by obtaining a majorization result for nonnegative sums of Gamma \rv's. The present work's Theorem \ref{thm:main:rel}, in particular, is more or less a restatement of a result from \cite{roosta2015schur}.

    This current paper is novel in two main ways. First, it considers how the above bounds might be strengthened when the matrix in question has a large effective rank. Second, it obtains absolute error bounds for indefinite matrices \textemdash that is, when the sum of Gamma random variables is not necessarily nonnegative.

    \section{Majorization}

    In \cite{roosta2015schur}, Roosta-Khorasani and Sz\'{e}kely use the {\em majorization order} on the eigenvalues of a matrix as a measure of the ``skewness'' of its spectrum. Their general observation is that the Gaussian trace estimator performs worse when the spectrum is highly skewed. In other words: the majorization order is the partial ordering that they (and we) use to determine for which matrices the Gaussian trace estimator yields the worst relative error bounds.

    Given a nonnegative vector $\asvector{\lambda} \in \mathbb{R}^n$, let $\lambda_{[i]}$ denote its indices in sorted order, so that $\lambda_{[1]} \geq \ldots \geq \lambda_{[n]}\geq 0$. We say that \asvector{\lambda} {\em majorizes} \asvector{\mu}, denoted $\asvector{\mu} \preceq \asvector{\lambda}$, if
	\begin{subequations}
		\begin{align}
			\sum_{i=1}^k \mu_{[i]} &\leq \sum_{i=1}^k \lambda_{[i]}, \quad \forall k \leq n, \label{def:maj_dom}\\
			\sum_{i=1}^n \mu_i &= \sum_{i=1}^n \lambda_i. \label{def:maj_sum}
		\end{align}
	\end{subequations}
	Similarly, we say that \asvector{\lambda} {\em weakly majorizes} \asvector{\mu}, denoted $\asvector{\mu} \preceq_w \asvector{\lambda}$, just as long as \eqref{def:maj_dom} holds. If $\asvector{\lambda}$ and $\asvector{\mu}$ do not have the same length, pad the shorter one with zeroes as necessary.

    If $\asvector{\lambda}$ weakly majorizes $\asvector{\mu}$ but does not majorize it, then $\sum_{i=1}^n\mu_i < \sum_{i=1}^n\lambda_i$. For our proofs, it will be useful to identify the indices for which the inequalities in \eqref{def:maj_dom} are strict.

    \begin{definition}\label{def:slack_index}
        Given $\asvector{\mu}\preceq_w \asvector{\lambda}$, the {\em leading slack index} is the smallest number $j$ such that
        \begin{equation*}
            \sum_{i=1}^\ell \mu_{[i]} < \sum_{i=1}^\ell \lambda_{[i]}, \quad \forall \ell \in \{ j,\ldots,n\}.
        \end{equation*}
    \end{definition}
    
     This index has the property that $\mu_{[j]}$ (and no larger entry of $\asvector{\mu}$) can be increased by a nonzero amount without violating the majorization condition. 

     In deriving their main results, Roosta-Khorasani and Sz\'{e}kely make use of the following lemma \cite[12.5a]{pecaric1992convex}.
     \begin{lemma}\label{lemma:transform_spsd}
         If $\asvector{\mu}\preceq \asvector{\lambda}$, then there exists a finite sequence of vectors
         \begin{equation*}
             \asvector{\mu}\preceq \asvector{\eta}_1\preceq \ldots \preceq \asvector{\eta}_r = \asvector{\lambda}
         \end{equation*}
         so that $\asvector{\eta}_{i}$ and $\asvector{\eta}_{i+1}$ differ in two coordinates only for $i = 1,\ldots, r-1$.
     \end{lemma}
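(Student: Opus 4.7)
The plan is to build the chain one link at a time: starting from $\asvector{\mu}$, at each step I construct the next intermediate vector as a single T-transform of the current one, meaning a transfer of mass between exactly two coordinates, chosen so that the new vector still satisfies $\asvector{\mu} \preceq \cdot \preceq \asvector{\lambda}$ and lies strictly closer to $\asvector{\lambda}$. Iterating yields the required sequence, and the main work is exhibiting a valid T-transform whenever the current vector is not already $\asvector{\lambda}$.

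Assume WLOG that both vectors are sorted in decreasing order and $\asvector{\mu} \neq \asvector{\lambda}$. First I would let $i$ be the smallest index with $\mu_{[i]} \neq \lambda_{[i]}$; since the top-$(i-1)$ sums of $\asvector{\mu}$ and $\asvector{\lambda}$ agree and \eqref{def:maj_dom} holds at $i$, this forces $\mu_{[i]} < \lambda_{[i]}$. Next I would let $j$ be the smallest index strictly greater than $i$ with $\mu_{[j]} > \lambda_{[j]}$; such a $j$ exists because $\sum_\ell \mu_\ell = \sum_\ell \lambda_\ell$ while the top-$i$ partial sums already differ. Put $\delta = \min(\lambda_{[i]} - \mu_{[i]},\; \mu_{[j]} - \lambda_{[j]}) > 0$ and form $\asvector{\eta}$ from $\asvector{\mu}$ by adding $\delta$ at coordinate $i$ and subtracting $\delta$ at coordinate $j$. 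Then $\asvector{\eta}$ differs from $\asvector{\mu}$ in only those two coordinates, and by construction at least one of $\eta_{[i]} = \lambda_{[i]}$ or $\eta_{[j]} = \lambda_{[j]}$ holds.

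I would verify the sandwich $\asvector{\mu} \preceq \asvector{\eta} \preceq \asvector{\lambda}$ through the Ky Fan characterization $\sum_{\ell=1}^k x_{[\ell]} = \max_{|S|=k} \sum_{\ell \in S} x_\ell$. The left inequality is the standard ``reverse Robin Hood'' observation: the T-transform moves mass from a smaller-valued to a larger-valued coordinate of $\asvector{\mu}$, which only increases every top-$k$ partial sum. For the right inequality, only the case $i \in S$, $j \notin S$ is nontrivial; when $k < j$ the bound follows from minimality of $j$ (which gives $\lambda_{[\ell]} \geq \mu_{[\ell]}$ for $i < \ell < j$), and when $k \geq j$ the worst candidate set $S = \{[1], \ldots, [k+1]\} \setminus \{[j]\}$ is controlled by combining $\asvector{\mu} \preceq \asvector{\lambda}$ at index $k+1$ with $\lambda_{[k+1]} \leq \lambda_{[j]} \leq \mu_{[j]} - \delta$.

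The main obstacle is this $k \geq j$ case. After the T-transform, the sort order of $\asvector{\eta}$ can genuinely differ from that of $\asvector{\mu}$, so naively tracked sorted partial sums break down; the Ky Fan reformulation reduces the matter to set-theoretic bookkeeping, but one must still rule out the most adversarial rearrangement. Once the sandwich is in hand, finite termination of the iteration follows by a standard induction on the number of sorted coordinates at which the current vector differs from $\asvector{\lambda}$: each T-transform saturates one of the two defining constraints and strictly reduces this count, producing in finitely many steps the required chain $\asvector{\mu} = \asvector{\eta}_0 \preceq \cdots \preceq \asvector{\eta}_r = \asvector{\lambda}$.
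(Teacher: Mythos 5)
The paper itself offers no proof of this lemma---it simply cites \cite{pecaric1992convex}---so you are supplying an argument for a classical fact, and your overall strategy (a chain of two-coordinate transfers, each verified to remain sandwiched between $\asvector{\mu}$ and $\asvector{\lambda}$ via the Ky Fan characterization of the sorted partial sums) is the standard one. The sandwich verification, which is the substantive part of the argument, is correct: the left inequality, the $k<j$ case, and the $k\ge j$ case with the extremal set $S=\{[1],\dots,[k+1]\}\setminus\{[j]\}$ all check out.

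The gap is in the termination argument. Your potential---the number of \emph{sorted} coordinates at which the current vector differs from $\asvector{\lambda}$---does not strictly decrease, for exactly the re-sorting reason you flagged in the sandwich step but did not carry over here. Take $\asvector{\mu}=(10,5,5)$ and $\asvector{\lambda}=(13,4,3)$ (one checks $\asvector{\mu}\preceq\asvector{\lambda}$). Your rule gives $i=1$, $j=2$, $\delta=\min(3,1)=1$, hence $\asvector{\eta}=(11,4,5)$, which re-sorts to $(11,5,4)$. The transfer does saturate the constraint at the \emph{unsorted} coordinate $j$ (the value $4=\lambda_{[2]}$ is created), but after re-sorting that value slides down to position $3$, where $\lambda_{[3]}=3$; the sorted vectors still disagree in all three positions, so the count stays at $3$. (The iteration happens to terminate here after three steps, but not for the reason you give, and as written you have not ruled out an infinite sequence of ever-smaller transfers.) The standard repair is to build the chain from the top down, modifying the \emph{majorizing} vector as in Marshall--Olkin, Lemma 2.B.1: choose $j$ to be the largest index with $\lambda_{[j]}>\mu_{[j]}$ and $k$ the smallest index greater than $j$ with $\lambda_{[k]}<\mu_{[k]}$, so that $\lambda_{[\ell]}=\mu_{[\ell]}$ for all $j<\ell<k$. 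With that choice the transferred vector provably remains sorted, the saturated coordinate stays in place, and the differing-coordinate count genuinely drops; reversing the resulting chain gives the statement of the lemma.
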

     This lemma lets us assume without loss of generality that $\asvector{\mu}$ and $\asvector{\lambda}$ differ in two coordinates only, satisfying
     \begin{equation*}
         0 \leq \lambda_{k} < \mu_{k} \leq \mu_j < \lambda_j.
     \end{equation*}

     \subsection{Indefinite majorization}

     For indefinite matrices with fixed Frobenius norm as in $\aabs(\lambda, \phi)$, the majorization condition is somewhat more complicated. Here is why: a Gamma random variable is nonnegative, so its lower tail can't get too far from the mean. The average of many Gamma random variables, however, looks more like a normal distribution which has tails in both directions. This suggests that $\tr_m^G(\mat{A})$ will have worse absolute upper tail bounds when the nonnegative eigenvalues are highly skewed and when the {\em negative} eigenvalues are tightly {\em clustered}.
     
     We will also see that $\tr_m^G(|\mat{A}|)$ has worse upper tail bounds than $\tr_m^G(\mat{A})$, for indefinite $\mat{A}$. Analogous results for lower tail bounds follow by symmetry.

    Now we describe the majorization condition more precisely.

    \begin{definition}
        For a vector $\asvector{\lambda}$, define
        \begin{equation*}
            \asvector{\lambda}^{-} := \min(\asvector{\lambda}, \asvector{0})
            \quad\text{and}\quad
            \asvector{\lambda}^{+} := \max(\asvector{\lambda}, \asvector{0}),
        \end{equation*}
        where the min and max operations are done elementwise.
    \end{definition}

    \begin{definition}\label{defn:abs_majorization}
        For vectors $\asvector{\lambda}$ and $\asvector{\mu}$, we say that $\asvector{\mu} \precabs \asvector{\lambda}$ if
        \begin{subequations}
            \begin{align}
                (\asvector{\mu}^{+})^2 &\preceq_w (\asvector{\lambda}^+)^2, \label{def:mm_pos} \\
                (\asvector{\lambda}^{-})^2 &\preceq_w (\asvector{\mu}^-)^2, \label{def:mm_neg} \\
                \sum_{i=1}^n \mu_i^2 &= \sum_{i=1}^n \lambda_i^2. \label{def:mm_sum}
            \end{align}
        \end{subequations}
    \end{definition}

    Condition \eqref{def:mm_pos} specifies that the positive entries of $\asvector{\lambda}$ are more skewed, and perhaps have more total weight, than those of $\asvector{\mu}$. Condition \eqref{def:mm_neg} specifies the opposite for the negative entries. Condition \eqref{def:mm_sum} means that the matrices associated with $\asvector{\lambda}$ and $\asvector{\mu}$ have the same Frobenius norm. This last condition also means that the associated trace estimators have the same variance.

    For the main result, we propose a lemma analogous to Lemma \ref{lemma:transform_spsd} (see Appendix \ref{sec:appendix}) for the proof).

    \begin{lemma}\label{lemma:transform}
        If $\asvector{\mu} \precabs \asvector{\lambda}$, then there exists a finite sequence of vectors
        \begin{equation*}
            \asvector{\mu}\precabs \asvector{\eta}_1\precabs \ldots \precabs \asvector{\eta}_r = \asvector{\lambda}
        \end{equation*}
        so that $\asvector{\eta}_{i}$ and $\asvector{\eta}_{i+1}$ differ in two coordinates only for $i = 1,\ldots, r-1$. Furthermore, the difference between consecutive vectors may be assumed to take one of the following forms:
        \begin{enumerate}[noitemsep]
            \item $\mu_k < \lambda_k \leq 0 \leq \mu_j < \lambda_j$;
            \item $0 \leq \lambda_k < \mu_k \leq \mu_j < \lambda_j$;
            \item $\mu_k < \lambda_k \leq \lambda_j < \mu_j \leq 0$.
        \end{enumerate}
    \end{lemma}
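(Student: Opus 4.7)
The plan is to mimic Lemma \ref{lemma:transform_spsd}: starting from $\asvector{\eta}_0 := \asvector{\mu}$, repeatedly modify two coordinates so that each step matches one of the three enumerated forms and preserves $\asvector{\eta}_i \precabs \asvector{\lambda}$. The whole argument hinges on the nonnegative defect
\[
\Delta(\asvector{\eta}) := \sum_i (\lambda_i^+)^2 - \sum_i (\eta_i^+)^2 = \sum_i (\eta_i^-)^2 - \sum_i (\lambda_i^-)^2,
\]
whose two expressions agree by condition \eqref{def:mm_sum}.

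\textbf{Phase 1 ($\Delta > 0$).} I use form 1 moves to drive $\Delta$ down to zero. Pick $k$ with $\eta_k < 0$ and $j$ with $\eta_j \geq 0$, and send $(\eta_k, \eta_j)$ to $\bigl(-\sqrt{\eta_k^2 - \alpha},\, \sqrt{\eta_j^2 + \alpha}\bigr)$ for some $\alpha > 0$. This preserves \eqref{def:mm_sum}, decreases $(\asvector{\eta}^-)^2$ by $\alpha$ at entry $k$, and increases $(\asvector{\eta}^+)^2$ by $\alpha$ at entry $j$. To keep \eqref{def:mm_pos} and \eqref{def:mm_neg} valid against $\asvector{\lambda}$, I would choose $k$ so that $\eta_k^2$ is the smallest positive entry of $(\asvector{\eta}^-)^2$ (decreasing this entry never violates \eqref{def:mm_neg}), and $j$ to be any index at which the sorted weak majorization \eqref{def:mm_pos} has strict slack; such $j$ exists because $\sum_i (\eta_i^+)^2 < \sum_i (\lambda_i^+)^2$ when $\Delta > 0$. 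Taking $\alpha$ maximal makes some partial sum in \eqref{def:mm_pos} tight, or brings $\eta_k$ to zero, or equates $\eta_k^2 - \alpha$ with $\eta_j^2 + \alpha$; each such event is a form 1 update.

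\textbf{Phase 2 ($\Delta = 0$).} Now \eqref{def:mm_pos} and \eqref{def:mm_neg} are full majorizations on the squared parts. Apply Lemma \ref{lemma:transform_spsd} to $(\asvector{\eta}^+)^2 \preceq (\asvector{\lambda}^+)^2$ to obtain a chain of two-coordinate T-transforms on the squared nonnegative part; each lifts, via positive square roots, to a form 2 update on the signed vector. Apply Lemma \ref{lemma:transform_spsd} to $(\asvector{\lambda}^-)^2 \preceq (\asvector{\eta}^-)^2$, traverse it in reverse, and lift each T-transform via negative square roots to a form 3 update. Concatenating the two chains ends at $\asvector{\lambda}$, completing the construction.

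\textbf{Main obstacle.} The delicate step is the Phase 1 analysis. One must verify that for $\Delta > 0$ the indices $j, k$ described above always exist, that a strictly positive $\alpha$ satisfies every partial-sum constraint from \eqref{def:mm_pos} and \eqref{def:mm_neg}, and that the process terminates after finitely many form 1 moves. Existence of $j$ and $k$ reduces to exhibiting slack in each weak majorization, which follows from $\Delta > 0$; positivity of $\alpha$ requires a local calculation on partial sums at $j$ and $k$ in sorted order; and finite termination requires a complexity argument, for instance bounding the number of partial-sum equalities that can be generated before one of $(\asvector{\eta}^+)^2$, $(\asvector{\eta}^-)^2$ matches the corresponding squared part of $\asvector{\lambda}$ coordinatewise.
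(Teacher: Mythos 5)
Your proposal follows the same two-phase route as the paper's own proof: first eliminate the slack between $\sum_i(\eta_i^+)^2$ and $\sum_i(\lambda_i^+)^2$ by form-1 transfers of squared mass from a negative coordinate to a nonnegative one (padding with zeros if no nonnegative coordinate exists), then apply Lemma \ref{lemma:transform_spsd} separately to $(\asvector{\eta}^+)^2 \preceq (\asvector{\lambda}^+)^2$ and $(\asvector{\lambda}^-)^2 \preceq (\asvector{\eta}^-)^2$ and lift the resulting chains through signed square roots to obtain form-2 and form-3 steps. The one loose end you flag in Phase~1 --- guaranteeing $\alpha>0$ --- is exactly what Definition \ref{def:slack_index} is for: choosing $j$ to be the \emph{leading} slack index, so that every partial sum from its sorted position through $n$ is strict, guarantees a nonzero admissible increase, whereas ``any index at which \eqref{def:mm_pos} has strict slack'' can be blocked by a tight partial sum at a later position and yield $\alpha=0$.
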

    Once again, we may assume without loss of generality that $\asvector{\mu}$ and $\asvector{\lambda}$ differ in two coordinates only, this time satisfying $\lambda_j^2 - \mu_j^2 = \mu_k^2 - \lambda_k^2 > 0$.

    \section{Gamma random variables}
    In this section we cover some relevant properties of Gamma random variables, and explain how to reformulate the original trace estimation problem in terms of such variables.
    
    Since real symmetric matrices are orthogonally diagonalizable and Gaussian vectors are rotationally invariant, the Gaussian trace estimator \eqref{def:G} can be written in the form
    \begin{equation}\label{eqn:chisq}
        \tr_m^G(\mat{A}) = \sum_{i=1}^n\lambda_iX_i, \quad X_i \simiid \tfrac{1}{m}\chi_m^2,
    \end{equation}
    where $\chi_m^2$ is a chi-squared variable with $m$ degrees of freedom.

    The chi-squared distribution is a special case of the Gamma distribution, and so we can generalize \eqref{eqn:chisq} as follows: 
    \begin{definition}
        Given a vector $\asvector{\lambda} = (\lambda_1,\ldots,\lambda_n)\in \mathbb{R}^n$, denote
    \begin{equation*}
        Q(\asvector{\lambda};\alpha,\beta) := \sum_{i=1}^n\lambda_iX_i, \quad X_i\simiid Gamma\left(\alpha, \beta\right).
    \end{equation*} 
    When $\alpha$ and $\beta$ are clear from context, we will use $\qlam$ for short.
    \end{definition}

    With the notation above, $\tr_m^G(\mat{A})$ has the same distribution as $Q\left(\asvector{\lambda}_{\mat{A}}; \tfrac{m}{2},\tfrac{m}{2}\right)$. 

    \subsection{Basic properties}\label{sec:gamma_properties}

    Here are a few other facts about the Gamma distribution:
    \begin{itemize}
        \item A Gamma random variable with shape $\alpha > 0$ and rate $\beta > 0$ has the PDF
        \begin{equation}\label{eqn:gammaPDF}
    		f(x) = \begin{cases}
    			\frac{\beta^\alpha}{\Gamma(\alpha)}x^{\alpha - 1}e^{-\beta x} & x \geq 0, \\
    			0 & x \leq 0.
    		\end{cases}
    	\end{equation}
        The tail decays more slowly than the tail of a normal distribution.
        \item If $X\sim Gamma(\alpha,\beta)$ then $\mathbb{E}[X] = \alpha/\beta$ and $\var[X] = \alpha/\beta^2$. Furthermore, $X$ is unimodal with mode $(\alpha-1)/\beta$ if $\alpha \geq 1$ and 0 otherwise.
        \item Gamma random variables follow a scaling law: if ${X\sim Gamma(\alpha,\beta)}$ and $\lambda > 0$, then $\lambda X\sim Gamma(\alpha, \beta/\lambda)$.
        \item Gamma random variables with the same rate parameter have an additive property: if $X_1 \sim Gamma(\alpha_1, \beta)$ and $X_2\sim Gamma(\alpha_2, \beta)$, then ${X_1+X_2 \sim Gamma(\alpha_1+\alpha_2,\beta)}$. 
        \item Conversely, Gamma random variables are {\em infinitely divisible}: if $X\sim Gamma(\alpha,\beta)$ then for any positive integer $T$, $X$ has the same distribution as $\sum_{i=1}^T X_i$, where $X_1,\ldots,X_T\simiid Gamma(\alpha/T, \beta)$.
    \end{itemize}

    \subsection{Generalizing the distribution families}
    In order to define $\qrel$ and $\qabs$ as extensions of $\arel$ and $\aabs$, we first generalize the notions of 2-norm and effective rank.

    \begin{definition}\label{lemma:scale}
        For a random variable $Q_{\asvector{\lambda}} = Q(\asvector{\lambda};\alpha,\beta)$, we define the {\em scale} of $Q_{\asvector{\lambda}}$ as 
        \begin{equation*}
            \scale(Q_{\asvector{\lambda}}) = \frac{\max_i|\lambda_i|}{\beta}.
        \end{equation*} 
    \end{definition}
    It is worth mentioning that $\scale(\qlam)$ is a property of the {\em distribution itself}, not just its representation in terms of $(\asvector{\lambda}, \alpha, \beta)$.

    \begin{definition}
        For a random variable $Q_{\asvector{\lambda}} = Q(\asvector{\lambda};\alpha,\beta)$ with nonnegative weights $\asvector{\lambda}$, we define the {\em effective shape} of $Q_{\asvector{\lambda}}$ as 
        \begin{equation*}
            \effshape(Q_{\asvector{\lambda}}) := \frac{\mathbb{E}[\qlam]}{\scale(\qlam)} = \frac{\alpha\sum_{i=1}^n\lambda_i}{\max_i \lambda_i}.
        \end{equation*}
    \end{definition}

    Now we can extend the definition of $\arel(\mu)$:

    \begin{definition}\label{def:qrel}
        For fixed $\alpha>0$ and $\beta >0$, define
        \begin{equation*}
            \mathcal{Q}_{\mathrm{rel}}(\mu; \alpha, \beta) := \{Q_{\asvector{\lambda}}\,:\,\scale(\qlam)\leq \tfrac{1}{\mu},\, \mathbb{E}[\qlam] = 1\},\quad \alpha \leq \mu,
        \end{equation*}
        where the weights $\asvector{\lambda}\in \mathbb{R}^n$ are nonnegative.
    \end{definition}

    The parameter $\mu$ in $\mathcal{Q}_{\mathrm{rel}}(\mu; \alpha, \beta)$ is a lower bound on the effective shape of $\qlam$. This definition generalizes the case of matrix trace estimation since
    \begin{equation*}
        \tr_m^G\left(\mathcal{A}_{\mathrm{rel}}(\mu)\right) = \mathcal{Q}_{\mathrm{rel}}\left(\tfrac{m\mu}{2}; \tfrac{m}{2}, \tfrac{m}{2}\right).
    \end{equation*}

    The definition of $\aabs(\lambda, \phi)$ can be extended similarly:
    \begin{definition}\label{def:qabs}
        For fixed $\alpha >0$ and $\beta>0$, define
        \begin{equation*}
            \qabs(\lambda,\phi; \alpha, \beta) := \{\qlam\,:\,\scale(\qlam) \leq \lambda, \var[\qlam] = \phi^2 \},\quad 0 < \lambda \leq \tfrac{1}{\sqrt{\alpha}}\phi.
        \end{equation*}
    \end{definition}
    In this case, we have
    \begin{equation*}
        \tr_m^G(\aabs(\lambda, \phi)) = \qabs\left(\tfrac{2\lambda}{m}, \phi\sqrt{\tfrac{2}{m}}; \tfrac{m}{2}, \tfrac{m}{2}\right).
    \end{equation*}

    \subsection{Infinite division} \label{sec:infinite_division}

    Our strategy for relaxing the original trace estimation problem is to use the fact that Gamma random variables are infinitely divisible (see Section \ref{sec:gamma_properties}). Any Gamma random variable may be rewritten as a sum of arbitrarily many Gamma random variables with smaller shape parameters $\alpha$, and this fact enables us to nest some distribution families within others.

    \begin{lemma}
        For any integer $T\geq 1$, it holds that $Q(\asvector{\lambda}; \alpha, \beta)$ has the same distribution as $Q\big((\underbrace{\asvector{\lambda},\ldots, \asvector{\lambda}}_{T\text{ times}}); \tfrac{\alpha}{T}, \beta\big)$, and consequently that 
        \begin{equation*}
            \qrel(\mu; \alpha, \beta) \subseteq \qrel\left(\mu; \tfrac{\alpha}{T}, \beta\right)
            \quad\text{and}\quad
            \qabs(\lambda, \phi; \alpha, \beta) \subseteq \qabs\left(\lambda, \phi; \tfrac{\alpha}{T}, \beta\right).
        \end{equation*}
    \end{lemma}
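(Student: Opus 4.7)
The plan is to handle the distributional identity first and then derive the set inclusions as essentially immediate corollaries. For the distributional claim, I would write $Q(\asvector{\lambda}; \alpha, \beta) = \sum_{i=1}^n \lambda_i X_i$ with $X_i \simiid Gamma(\alpha, \beta)$, and invoke the infinite divisibility property already recorded in Section \ref{sec:gamma_properties}: each $X_i$ has the same distribution as $\sum_{t=1}^T Y_{i,t}$ with $Y_{i,t}\simiid Gamma(\alpha/T, \beta)$, and all $Y_{i,t}$ may be taken mutually independent across both indices. Substituting and reindexing the double sum as a single sum over $nT$ terms with weight vector $(\asvector{\lambda},\ldots,\asvector{\lambda})$ (the weight $\lambda_i$ appearing $T$ times, once for each $t$) yields exactly the right-hand side of the claimed identity.

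For the inclusion $\qrel(\mu;\alpha,\beta)\subseteq \qrel(\mu;\alpha/T,\beta)$, I would take an arbitrary $Q_{\asvector{\lambda}}\in \qrel(\mu;\alpha,\beta)$ and rewrite it using part (1) as $Q_{\asvector{\lambda}'}$ with $\asvector{\lambda}' = (\asvector{\lambda},\ldots,\asvector{\lambda})$ and parameters $(\alpha/T,\beta)$. It then suffices to verify the three defining conditions in Definition \ref{def:qrel} under the new parameters: nonnegativity of $\asvector{\lambda}'$ is inherited from $\asvector{\lambda}$; the scale
\[
\scale(Q_{\asvector{\lambda}'}) = \frac{\max_i |\lambda'_i|}{\beta} = \frac{\max_i |\lambda_i|}{\beta} = \scale(Q_{\asvector{\lambda}}) \leq \tfrac{1}{\mu}
\]
is unchanged because repeating the entries does not change the maximum; and the expectation
\[
\mathbb{E}[Q_{\asvector{\lambda}'}] = \frac{\alpha}{T\beta}\cdot T\sum_{i=1}^n \lambda_i = \frac{\alpha}{\beta}\sum_{i=1}^n \lambda_i = \mathbb{E}[Q_{\asvector{\lambda}}] = 1
\]
is also unchanged, the factor $T$ from repetition cancelling the $1/T$ in the shape. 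Finally, the side condition $\alpha/T \leq \mu$ of the larger family holds since $\alpha \leq \mu$ and $T \geq 1$.

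The argument for $\qabs(\lambda,\phi;\alpha,\beta)\subseteq \qabs(\lambda,\phi;\alpha/T,\beta)$ is parallel: the scale is invariant under repetition of $\asvector{\lambda}$, the variance
\[
\var[Q_{\asvector{\lambda}'}] = \frac{\alpha}{T\beta^2}\cdot T\sum_{i=1}^n \lambda_i^2 = \frac{\alpha}{\beta^2}\sum_{i=1}^n \lambda_i^2 = \var[Q_{\asvector{\lambda}}] = \phi^2,
\]
and the admissibility condition $\lambda\leq \phi/\sqrt{\alpha/T} = \phi\sqrt{T/\alpha}$ for the larger family follows from $\lambda\leq \phi/\sqrt{\alpha}$ and $T\geq 1$.

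There is no serious obstacle here; the one thing worth being careful about is to point out explicitly that the scale and the moments are invariants of the distribution rather than of the representation, which is what allows us to identify $Q_{\asvector{\lambda}}$ with $Q_{\asvector{\lambda}'}$ as members of the (larger) family without double-counting. This is consistent with the remark following Definition \ref{lemma:scale}, and once it is noted the verifications reduce to the short calculations above.
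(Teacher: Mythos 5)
Your proof is correct and follows exactly the route the paper intends: the lemma is stated without proof as a direct consequence of the infinite divisibility property recorded in Section 3.1, and your decomposition, reindexing, and verification of the scale, moment, and parameter conditions supply precisely the omitted details.
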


    By considering the limit as $T\rightarrow \infty$, we can effectively do away with the constraint of sharing a single scale parameter, and in doing so obtain the more general sets $\qrel(\mu)$ and $\qabs(\lambda, \phi)$ promised in \eqref{eqn:extrema_qrel} and \eqref{eqn:extrema_qabs}.

    \begin{definition}\label{def:qrel_generalized}
        For $\mu > 0$, let $\qrel(\mu)$ be the set of all finite linear combinations of Gamma random variables
        \begin{equation*}
            Q = \sum_{i=1}^n\lambda_iX_i,\quad X_i\simiid Gamma(\alpha_i,\beta_i),\, \lambda_i \geq 0,\, \alpha_i > 0,\, \beta_i > 0,
        \end{equation*}
        subject to the constraints $\scale[Q] \leq \tfrac{1}{\mu}$ and $\mathbb{E}[Q] = 1$. 
    \end{definition}
    \begin{definition}\label{def:qabs_generalized}
        For $\lambda >0$ and $\phi >0$, let $\qabs(\lambda, \phi)$ be the set of all finite linear combinations of Gamma random variables 
        \begin{equation*}
            Q = \sum_{i=1}^n\lambda_iX_i,\quad X_i\simiid Gamma(\alpha_i,\beta_i),\, \alpha_i > 0,\, \beta_i > 0,
        \end{equation*}
        subject to the constraints $\scale[Q] \leq \lambda$ and $\var[Q] = \phi^2$. 
    \end{definition}

    For any fixed $\alpha>0$ and $\beta>0$, for sufficiently large $T$ the set $\qrel(\mu; \tfrac{\alpha}{T}, \beta)$ contains distributions arbitrarily close to any given distribution in $\qrel(\mu)$; the same goes for $\qabs(\lambda, \phi; \tfrac{\alpha}{T}, \beta)$ and $\qabs(\lambda, \phi)$. Thus any bounds we derive for the former set will also apply to the latter. 

    \subsection{Fixed shape and scale parameters}
    The main results assume that parameters $\alpha$ and $\beta$ are fixed in order to keep the majorization definitions as simple as possible. If $\alpha_i$ and $\beta_i$ were allowed to differ for each random variable $X_i$ in the mixture $Q$, we could still map $Q$ to a random variable that takes on values $\lambda_i/\beta_i$ with probability $\alpha_i$, then make comparisons using the stochastic ordering. I don't feel that the added complexity is worth it, so won't pursue this approach further.

    \section{Majorization theorems}

    This section presents the main majorization results. The relative error result is essentially a restatement of results from \cite{roosta2015schur}; to the best of my knowledge the absolute error result is novel.

    \subsection{Relative error}

    As mentioned above, this first result was essentially proved as part of \cite[Thm.~1]{roosta2015schur}. I've still included the proof (Appendix \ref{sec:appendix:relative_error}) for the sake of completeness.

    \begin{theorem}\label{thm:main:rel}
        Fix $\mu\geq \alpha > 0$, and $\beta > 0$. Define $\xupper$ and $\xlower$ to be the extreme values of the mode of the PDF of the distribution
        \begin{equation*}
            Q_{\asvector{\lambda}}+\lambda_j\psi + \lambda_k\psi', \quad Q_{\asvector{\lambda}}\in \mathcal{Q}_{\mathrm{rel}}(\mu;\alpha,\beta),\ j\neq k,
        \end{equation*}
        where $\psi, \psi'\simiid Gamma\left(1,\beta\right)$ are exponential \rv's independent of $Q_{\asvector{\lambda}}$. If $Q_{\asvector{\lambda}}, Q_{\asvector{\mu}} \in \mathcal{Q}_{\mathrm{rel}}(\mu;\alpha,\beta)$ satisfy $\asvector{\mu}\preceq \asvector{\lambda}$, then
        \begin{align*}
            \prob\left(Q_{\asvector{\mu}} \leq x\right) &\geq \prob\left(Q_{\asvector{\lambda}} \leq x\right), \quad \forall x \geq \xupper, \\
            \prob\left(Q_{\asvector{\mu}} \leq x\right) &\leq \prob\left(Q_{\asvector{\lambda}} \leq x\right), \quad \forall x \leq \xlower.
        \end{align*}
    \end{theorem}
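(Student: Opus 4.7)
The plan is to differentiate $F_t(x) := \prob(Q_{\asvector{\eta}(t)} \leq x)$ along a smooth interpolation from $\asvector{\mu}$ to $\asvector{\lambda}$ and control the sign of this derivative. By Lemma \ref{lemma:transform_spsd}, I reduce to the case where $\asvector{\mu}$ and $\asvector{\lambda}$ differ in only two coordinates $j,k$, with $0 \leq \lambda_k < \mu_k \leq \mu_j < \lambda_j$ and $s := \lambda_j + \lambda_k = \mu_j + \mu_k$. I set $\eta_j(t) = (1-t)\mu_j + t\lambda_j$ and $\eta_k(t) = s - \eta_j(t)$, freezing the other coordinates. The path stays inside $\qrel(\mu;\alpha,\beta)$: the mean is preserved by the transfer, and $\max_i \eta_i(t) \leq \lambda_j \leq \beta/\mu$.

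Denoting $Q := Q_{\asvector{\eta}(t)}$ and writing $Q = R + \eta_j X_j + \eta_k X_k$ with $R$ independent of $X_j, X_k$, differentiation under the integral combined with the Gamma size-bias identity $\E[X\phi(X)] = (\alpha/\beta)\E[\phi(X+\psi)]$ (for $X \sim \gammadist(\alpha,\beta)$ and independent $\psi \sim \gammadist(1,\beta)$), applied to each of $X_j$ and $X_k$, gives
\[
\frac{d}{dt}F_t(x) = -\dot\eta_j(t)\,\frac{\alpha}{\beta}\bigl[f_{Q+\eta_j\psi}(x) - f_{Q+\eta_k\psi'}(x)\bigr],
\]
with $\psi, \psi' \sim \gammadist(1,\beta)$ independent. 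Setting $W := Q + \eta_j \psi + \eta_k \psi'$ and using the convolution-by-exponential ODE $f'_W + (\beta/\eta_\ell) f_W = (\beta/\eta_\ell) f_{W - \eta_\ell \psi_\ell}$ with both $\ell = k$ (decomposing $W = (Q+\eta_j\psi) + \eta_k\psi'$) and $\ell = j$ (decomposing $W = (Q+\eta_k\psi') + \eta_j\psi$), then subtracting the two resulting expressions for $f_{Q+\eta_j\psi}$ and $f_{Q+\eta_k\psi'}$, yields the clean identity
\[
f_{Q+\eta_j\psi}(x) - f_{Q+\eta_k\psi'}(x) = -\frac{\eta_j - \eta_k}{\beta}\, f'_W(x),
\]
so the derivative simplifies to $\tfrac{d}{dt}F_t(x) = \dot\eta_j(t)\cdot \tfrac{\alpha(\eta_j - \eta_k)}{\beta^2}\, f'_W(x)$.

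Since $\dot\eta_j \geq 0$ and $\eta_j \geq \eta_k$ along the path, the sign of $\tfrac{d}{dt} F_t(x)$ matches that of $f'_W(x)$. By unimodality of $f_W$, $f'_W(x) \leq 0$ once $x$ is past the mode of $W$ and $\geq 0$ before it. Since $W$ is exactly of the form $Q_{\asvector{\lambda}} + \lambda_j \psi + \lambda_k \psi'$ with $Q_{\asvector{\lambda}} = Q_{\asvector{\eta}(t)} \in \qrel(\mu;\alpha,\beta)$ that defines $\xupper$ and $\xlower$, the mode of $W$ lies in $[\xlower, \xupper]$ for every $t$. Hence for $x \geq \xupper$ the derivative is $\leq 0$ uniformly in $t$, and integrating from $t=0$ to $t=1$ gives $F_{\asvector{\lambda}}(x) \leq F_{\asvector{\mu}}(x)$; the reverse inequality for $x \leq \xlower$ follows identically. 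The main potential pitfall is establishing unimodality of $f_W$: it is immediate from log-concavity of Gamma convolutions when $\alpha \geq 1$, but requires a separate argument when $\alpha < 1$, where the individual Gamma densities diverge at $0$.
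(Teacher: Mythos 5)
Your proof is correct and follows the same overall architecture as the paper's: reduce via Lemma \ref{lemma:transform_spsd} to a two-coordinate transfer, interpolate linearly, show that $\partial_t F_t(x)$ equals a nonnegative constant times $\partial_x f_W(x)$ with $W = Q_{\asvector{\eta}(t)} + \eta_j\psi + \eta_k\psi'$, and conclude by unimodality of $W$ together with the membership of each interpolant in $\qrel(\mu;\alpha,\beta)$ (which you rightly verify, since $\xupper$ and $\xlower$ are extrema over that family). The one genuine difference is how you reach the key identity: the paper works in transform space, differentiating $\mathcal{L}[F_{Y(t)}](z)$, combining the two resulting fractions algebraically, and inverting the Laplace transform, whereas you stay in physical space, using the Gamma size-bias identity $\E[X\phi(X)] = (\alpha/\beta)\E[\phi(X+\psi)]$ to produce the two exponential tiltings and the first-order ODE $f'_W + (\beta/\eta_\ell)f_W = (\beta/\eta_\ell)f_{W-\eta_\ell\psi_\ell}$ to collapse their difference into $-\tfrac{\eta_j-\eta_k}{\beta}f'_W(x)$. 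Both routes land on the identical formula $\partial_t F_t(x) = \tfrac{\alpha}{\beta^2}(\lambda_j-\mu_j)(\eta_j-\eta_k)f'_W(x)$; yours is more probabilistic and avoids justifying the inverse transform, at the cost of a formal differentiation under the integral (and a removable division by $\eta_k$ when $\lambda_k = 0$). On the pitfall you flag: unimodality for $\alpha < 1$ is exactly what the paper outsources to Sz\'{e}kely's theorem that any linear combination of independent Gamma random variables is unimodal, so your argument is complete once that citation is supplied.
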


    That is, $\qlam$ has worse upper and lower tail bounds.

    \subsubsection{Locating the tail}
    
    But what are the values of $\xupper$ and $\xlower$? In the most general case $\mu = \alpha$, \cite[Thm.~1]{roosta2015schur} shows that $\xupper = 1 + (2\alpha)^{-1}$ and $\xlower = 1-\alpha^{-1}$. These values necessarily serve as bounds whenever $\mu\geq \alpha$, but I would like to obtain stronger results for distributions with greater effective shape.
    
    Theorem \ref{thm:rel_main_Q} gives a pessimistic result: it tells us where the tails are {\em not}, rather than where they are.
    
    \begin{theorem}\label{thm:rel_main_Q}
        If $\mu > \alpha$, the points $\xupper$ and $\xlower$ as defined in Theorem \ref{thm:main:rel} satisfy
        \begin{equation*}
            \xupper \geq 1 + (\alpha \lceil \mu/\alpha\rceil)^{-1}
            \quad\text{and}\quad
            \xlower \leq 1 - (\alpha \lceil \mu/\alpha\rceil)^{-1}.
        \end{equation*}
    \end{theorem}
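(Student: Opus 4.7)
The plan is to prove both inequalities by exhibiting explicit distributions in $\qrel(\mu;\alpha,\beta)$ (together with index choices $j\neq k$) whose perturbed PDFs attain modes matching the claimed extremal values. Since $\xupper$ and $\xlower$ are respectively the supremum and infimum of the mode over all admissible $(Q_{\asvector{\lambda}}, j, k)$, one witness in each direction suffices. Set $N := \lceil \mu/\alpha\rceil$, which is at least $2$ since $\mu > \alpha$, and $c := \beta/(N\alpha)$; then $c/\beta = 1/(N\alpha)\leq 1/\mu$, so any nonnegative coefficient vector with entries bounded by $c$ automatically satisfies the scale constraint.

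For $\xupper$, take $\asvector{\lambda}\in\mathbb{R}^N$ with every entry equal to $c$. Then $\mathbb{E}[\qlam] = \alpha Nc/\beta = 1$, so $\qlam\in\qrel(\mu;\alpha,\beta)$. For any distinct $j,k$, the additive and scaling properties of Section \ref{sec:gamma_properties} collapse the perturbed sum to
\begin{equation*}
\qlam + \lambda_j\psi + \lambda_k\psi' = c\left(\sum_{i=1}^N X_i + \psi + \psi'\right) \sim Gamma(N\alpha + 2,\,N\alpha),
\end{equation*}
whose mode is $(N\alpha + 1)/(N\alpha) = 1 + (\alpha\lceil\mu/\alpha\rceil)^{-1}$. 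For $\xlower$, extend $\asvector{\lambda}$ to length $N+2$ by appending two zero entries and let $j,k$ index them. The perturbation then vanishes, leaving $\qlam\sim Gamma(N\alpha, N\alpha)$ with mode $1 - (\alpha\lceil\mu/\alpha\rceil)^{-1}$.

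Each step is a direct computation, so the main (minor) obstacle is verifying that both constructions truly lie inside $\qrel(\mu;\alpha,\beta)$; this is immediate from Definition \ref{def:qrel}, which admits zero weights. If one preferred strict positivity, setting $\lambda_j=\lambda_k=\varepsilon$ and reducing each of the other $N$ coordinates by $2\varepsilon/N$ yields the same mode in the limit $\varepsilon\to 0^+$ by continuity of the mode in the distributional parameters. The mode formula tacitly requires $N\alpha\geq 1$, which is satisfied in the intended trace-estimation regime ($\alpha = m/2$ with $m\geq 2$); the exotic range $N\alpha<1$, where the Gamma PDF is unbounded at the origin, would require separate handling of what is meant by ``mode.''
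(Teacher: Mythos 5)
Your proposal is correct and uses exactly the same witnesses as the paper: the uniform vector of length $N=\lceil\mu/\alpha\rceil$ with entries $\beta/(N\alpha)$ for the upper bound, and its zero-padded extension (so the $\psi,\psi'$ perturbation vanishes) for the lower bound, yielding $Gamma(N\alpha+2,N\alpha)$ and $Gamma(N\alpha,N\alpha)$ respectively. Your added caveats about zero weights and the $N\alpha<1$ regime are reasonable points of care that the paper's own proof glosses over, but they do not change the argument.
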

    \begin{proof}
        For the bound on $\xupper$, set $Q_{\asvector{\lambda}} = \sum_{i=1}^{\lceil \mu/\alpha\rceil}\lambda X_i$ with $\lambda = \tfrac{\beta}{\alpha\lceil \mu/\alpha\rceil}$. The distribution $Q_{\asvector{\lambda}} + \lambda_1\psi + \lambda_2\psi'$ is $Gamma(\alpha \lceil \mu/\alpha\rceil + 2, \lambda^{-1}\beta)$, so its mode is ${1 + (\alpha \lceil \mu/\alpha\rceil)^{-1}}$ as desired while $\mathbb{E}[\qlam] = 1$.

        For the bound on $\xlower$, take the same $Q_{\asvector{\lambda}}$, but pad it with eigenvalues $\lambda_j=\lambda_k=0$ instead.
    \end{proof}

    Unfortunately, I have not so far been able to establish bounds in the opposite direction. I will therefore leave them as a conjecture: 

    \begin{conjecture}\label{conjecture:rel_main_Q}
        The points $\xupper$ and $\xlower$ as defined in Theorem \ref{thm:main:rel} satisfy
        \begin{equation*}
            \xupper \leq 1 + \mu^{-1}\quad \text{and}\quad 
            \xlower \geq 1 - \mu^{-1}.
        \end{equation*}
    \end{conjecture}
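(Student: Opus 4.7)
The plan is to reduce, via the infinite-division argument of Section \ref{sec:infinite_division}, to the larger family $\qrel(\mu)$ of Definition \ref{def:qrel_generalized}, where the conjectured bounds are sharp. Specifically, taking $Q\sim \gammadist(\mu,\mu)$ with $\lambda_j=\lambda_k=1/\mu$ gives $Y:=Q+\lambda_j\psi+\lambda_k\psi'\sim \gammadist(\mu+2,\mu)$ with mode exactly $1+1/\mu$, and the same $Q$ paired with $\lambda_j=\lambda_k=0$ has mode exactly $1-1/\mu$. Both bounds are thus attained in a limit, and the task becomes showing that no other element of $\qrel(\mu)$ together with admissible $(\lambda_j,\lambda_k)$ can push the mode of $Y$ further out.

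For the upper bound I would first derive an implicit mode equation. Writing $g$ for the density of $Z:=\lambda_j\psi+\lambda_k\psi'$ and using $g(0)=0$, differentiating $f_Y=f_Q\ast g$ kills the boundary term and gives $f_Y'(m)=\int_0^m f_Q(u)\,g'(m-u)\,du$. In the equal-weight case $\lambda_j=\lambda_k=\lambda$, setting $f_Y'(m)=0$ solves to
\[
    m \;=\; \tfrac{\lambda}{\beta} \;+\; \frac{\int_0^m u\,f_Q(u)\,e^{\beta u/\lambda}\,du}{\int_0^m f_Q(u)\,e^{\beta u/\lambda}\,du},
\]
expressing $m$ as the mode of $Z$ plus the mean of $Q$ under an exponentially tilted, truncated-to-$[0,m]$ measure. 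The unequal-weight case yields a similar hypoexponential identity, and applying the mean value theorem to $\log\gamma$ shows that the mode of $Z$ is $1/\gamma^\ast$ for some $\gamma^\ast\in[\beta/\lambda_j,\beta/\lambda_k]$, hence at most $1/\mu$ whenever $\lambda_j,\lambda_k\leq\beta/\mu$. So the first term always contributes at most $1/\mu$.

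The main obstacle is controlling the tilted, truncated expectation of $Q$. Even though $\E[Q]=1$, the tilt $e^{\beta u/\lambda}$ has rate $\beta/\lambda\geq\mu>\alpha$ and is only integrable thanks to the truncation at $m$; a delicate cancellation is needed to show the tilted truncated mean cannot exceed $1+(1/\mu-\lambda/\beta)$. I would attempt this by a variational reduction in the spirit of Lemma \ref{lemma:transform_spsd}: argue that the mode of $Y$ depends monotonically on pair-wise changes of $\asvector{\lambda}$ compatible with majorization, and that every $Q\in\qrel(\mu)$ is dominated (in a suitably extended majorization order) by the canonical $\gammadist(\mu,\mu)$ limit. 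A secondary tactic would exploit log-concavity of $Y$ when $\alpha\geq 1$ to relate the mode to moments, then propagate to smaller $\alpha$ using the inclusions of Section \ref{sec:infinite_division}.

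For $\xlower$ the analysis should be easier: adding any nonnegative $Z$ only shifts the mode of $Y$ upward, so the infimum is attained (in the limit) by $\lambda_j=\lambda_k=0$, and the task reduces to proving $\mathrm{mode}(Q)\geq 1-1/\mu$ for every $Q\in\qrel(\mu)$. The analogous implicit equation has no tilt to contend with, only the mode of $Q$ itself, so once the variational/majorization argument for $\xupper$ is in place, the lower bound should follow with minor modifications by showing that $\gammadist(\mu,\mu)$ minimizes the mode within the family.
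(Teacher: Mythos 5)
This statement is a conjecture that the paper explicitly leaves open (the author writes that establishing these upper bounds on $\xupper$ and lower bounds on $\xlower$ has ``so far been beyond my ability''), so there is no paper proof to compare against. Your proposal is likewise not a proof: it is a plan whose central step is left unresolved. The parts you do carry out are sound and consistent with the paper --- the candidates $Gamma(\mu+2,\mu)$ and $Gamma(\mu,\mu)$ do attain the conjectured values $1+\mu^{-1}$ and $1-\mu^{-1}$ in the $\alpha\to 0$ limit (matching Theorem \ref{thm:rel_main_Q}), the mode identity $m = \lambda/\beta + \int_0^m u f_Q e^{\beta u/\lambda}\,du \big/ \int_0^m f_Q e^{\beta u/\lambda}\,du$ checks out, the hypoexponential mode bound $\leq 1/\mu$ is correct, and the reduction of $\xlower$ to $\mathrm{mode}(Q)\geq 1-\mu^{-1}$ via $f_Y'(m)=\E[f_Q'(m-Z)]\geq 0$ is valid given unimodality. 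But the conjecture is precisely the claim that no other admissible $Q$ and pair $(\lambda_j,\lambda_k)$ pushes the mode further out, and that is exactly what you defer to ``a delicate cancellation'' and a variational argument you ``would attempt.''

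The gap is not merely one of polish. Your proposed variational reduction --- that the mode of $Y$ is monotone under pairwise majorization transfers of $\asvector{\lambda}$ --- is essentially a restatement of the open problem, and it cannot be bootstrapped from Theorem \ref{thm:main:rel}: that theorem gives CDF monotonicity only for $x$ outside $(\xlower,\xupper)$, so using it to locate the mode (which lies inside that interval) is circular. Likewise, the tilted truncated mean in your identity is tilted at rate $\beta/\lambda \geq \mu > \alpha$, which exceeds the decay rate of $f_Q$'s largest component; the quantity is finite only because of the truncation at $m$, and you give no quantitative mechanism forcing it below $1 + \mu^{-1} - \lambda/\beta$. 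Until one of these two routes is actually executed, the conjecture remains open, as the paper says.
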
    

    \subsection{Absolute error}
    
    The relative error analysis in the previous section has two significant limitations: first, it only applies when $\mat{A}$ is SPSD. Second, $\effrank(\mat{A})$ may sometimes be significantly larger than $\stabrank(\mat{A})$, which is a better indicator of the quality of the Gaussian trace estimator since the variance of $\tr_m^G(\mat{A})$ is $\tfrac{2}{m}\|\mat{A}\|_F^2$. 

    We therefore turn to the case where $\mat{A}$ may be indefinite but bounds on $\|\mat{A}\|_F$ and $\|\mat{A}\|_2$ are known. As before, it is useful to present the results in terms of more general Gamma random variables.

    Here is the main result on the absolute error of the estimator (proof in Appendix \ref{sec:appendix:absolute_error}).

    \begin{theorem}\label{thm:majorization_absolute}
    Fix $\alpha>0$, $\beta>0$, and $\frac{\phi}{\sqrt{\alpha}} \geq \lambda > 0$. Define $\xhatupper$ to be the supremum over all inflection points of the PDF of the distribution
    \begin{equation}\label{eqn:abs:convex_family}
        \qlam + \lambda_j\psi + \lambda_k\psi' - \mathbb{E}[\qlam],\quad \qlam\in \mathcal{Q}_{\mathrm{abs}}(\lambda, \phi; \alpha, \beta),\, j\neq k,
    \end{equation}
    where $\psi, \psi'\simiid Gamma(1,\beta)$ are exponential \rv's independent of $\qlam$. If $\qlam, Q_{\asvector{\mu}}\in \qabs(\lambda, \phi; \alpha, \beta)$ satisfy $\asvector{\mu}\precabs\asvector{\lambda}$, then  
    \begin{equation*}
        \mathrm{Pr}(Q_{\asvector{\mu}} - \mathbb{E}[Q_{\asvector{\mu}}]\leq x) \geq \mathrm{Pr}(\qlam - \mathbb{E}[\qlam]\leq x), \quad \forall |x| > \xhatupper.
    \end{equation*}
        \end{theorem}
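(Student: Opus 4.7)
The plan is to reduce the general statement to infinitesimal two-coordinate perturbations via Lemma \ref{lemma:transform}, and then to show that along any such perturbation the derivative of the centered CDF at $x$ is a multiple of the second derivative of the PDF of some random variable in the family \eqref{eqn:abs:convex_family}. Since any smooth integrable density is convex in its tails, and by construction the supremum of its inflection points is at most $\xhatupper$, this second derivative is nonnegative for $|x|>\xhatupper$. Integrating along the perturbation then yields the desired one-sided CDF inequality on both tails.

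Concretely, I would first apply Lemma \ref{lemma:transform} to assume $\asvector{\mu}$ and $\asvector{\lambda}$ differ only in coordinates $j$ and $k$, with $\lambda_j^2+\lambda_k^2=\mu_j^2+\mu_k^2$, and parametrize the path between them by an angle $\theta$ along the arc $\lambda_j(\theta)^2+\lambda_k(\theta)^2=r^2$. Writing $Y(\theta)=Q_{\asvector{\lambda}(\theta)}-\E[Q_{\asvector{\lambda}(\theta)}]$ and using $\log \hat{f}_{Q_{\asvector{\lambda}}}(s)=-\alpha\sum_i\log(1-s\lambda_i/\beta)$, I would compute $\tfrac{d}{d\theta}\log\hat{f}_{Y(\theta)}(s)$: the mean-correction term $-s\,d\E[Q]/d\theta$ cancels the $O(s)$ part of the derivative of the log-product, and after partial fractions what remains takes the form
\[
\frac{d}{d\theta}\log\hat{f}_{Y(\theta)}(s) \;=\; \frac{c_\theta\, s^3}{(\beta-s\lambda_j)(\beta-s\lambda_k)},
\]
with $c_\theta$ proportional to $\lambda_j\lambda_k(\lambda_j-\lambda_k)$. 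Multiplying through by $\hat{f}_{Y(\theta)}(s)$ and recognizing $\beta/(\beta-s\lambda_i)$ as the MGF of $\lambda_i\psi$ for $\psi\sim Gamma(1,\beta)$, the derivative of the MGF becomes a multiple of $s^3\hat{f}_{Y(\theta)+\lambda_j\psi+\lambda_k\psi'}(s)$. Inverting the transform and integrating from $-\infty$ up to $x$ then gives
\[
\frac{d}{d\theta}\prob\bigl(Y(\theta)\le x\bigr) \;=\; D_\theta\, f''_{Y(\theta)+\lambda_j\psi+\lambda_k\psi'}(x),
\]
for an explicit $D_\theta$ depending on $\theta$ and the parametrization. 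Integrating in $\theta$ from the $\asvector{\lambda}$-endpoint to the $\asvector{\mu}$-endpoint then completes the argument, provided $D_\theta$ carries the correct sign.

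The main obstacle is the sign bookkeeping across the three configurations of Lemma \ref{lemma:transform}: the factor $\lambda_j\lambda_k(\lambda_j-\lambda_k)$ changes sign between case 1 (where the coordinates straddle zero, so $\lambda_j\lambda_k<0$) and cases 2--3 (where both have the same sign), and this must be reconciled with the direction of motion along the arc in each case so that the final coefficient of $f''$ and the direction $\asvector{\lambda}\to\asvector{\mu}$ conspire to give a monotone-decreasing $\theta\mapsto\prob(Y(\theta)\le x)$ for $|x|>\xhatupper$ in all three cases. A subtler technical point is verifying that $Y(\theta)+\lambda_j\psi+\lambda_k\psi'$ belongs to the family defining $\xhatupper$ for every $\theta$ on the arc, so that $\xhatupper$ validly bounds its inflection points throughout; this reduces to checking that the scale and variance constraints hold along the arc, which follow from the Frobenius invariance $\lambda_j(\theta)^2+\lambda_k(\theta)^2=r^2$ and the fact that $\max(|\lambda_j(\theta)|,|\lambda_k(\theta)|)$ stays within the scale bound defining $\qabs(\lambda,\phi;\alpha,\beta)$ on the arc.
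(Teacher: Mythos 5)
Your proposal follows essentially the same route as the paper's proof: reduce to a two-coordinate move via Lemma \ref{lemma:transform}, interpolate along a sum-of-squares-preserving path, and show via the Laplace transform that the derivative of the CDF along the path equals a signed multiple of $\frac{\partial^2}{\partial x^2}f_{Y+\nu_j\psi+\nu_k\psi'}(x)$, which has a fixed sign for $|x|>\xhatupper$. The sign bookkeeping you flag as the main obstacle is exactly the step the paper resolves by writing the coefficient as $(\lambda_j^2-\mu_j^2)(\nu_k(t)-\nu_j(t))$ and checking $\nu_k(t)\leq \nu_j(t)$ in each of the three cases of Lemma \ref{lemma:transform}; your arc parametrization produces the equivalent factor $\nu_j\nu_k(\nu_j-\nu_k)$, and your remaining observations (the path stays inside $\qabs(\lambda,\phi;\alpha,\beta)$, so $\xhatupper$ bounds the inflection points throughout) match the paper's argument.
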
 

    This single equation (note the use of the absolute value $|x|$ in the condition) means that $\qlam$ has worse upper tail bounds {\it and} that $\qmu$ has worse lower tail bounds.

    \subsubsection{Locating the tail}

    Theorem \ref{thm:majorization_absolute} implies that the ``tails'' are the regions where all density functions of the form \eqref{eqn:abs:convex_family} are convex. For reference, a normal distribution has inflection points equal to the mean plus or minus one standard deviation. 
    
    How much further away could the tails begin? Theorem \ref{thm:abs:possible_worst_case} gives a pessimistic result.
    
    \begin{theorem}\label{thm:abs:possible_worst_case}
        The point $\xhatupper$ as defined in Theorem \ref{thm:majorization_absolute} satisfies
        \begin{equation*}
            \xhatupper \geq \phi \frac{1 + \sqrt{r\alpha + 1}}{\sqrt{r\alpha}},\quad \text{where}\ r = \left\lceil \frac{\phi^2}{\lambda^2\alpha}\right\rceil.
        \end{equation*}
    \end{theorem}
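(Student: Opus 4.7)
The plan is to mirror the proof of Theorem~\ref{thm:rel_main_Q}: construct an explicit $\qlam \in \qabs(\lambda,\phi;\alpha,\beta)$ for which the enriched distribution in \eqref{eqn:abs:convex_family} collapses into a single Gamma variable, and then read off its upper inflection point directly. Specifically, set $\lambda' := \phi\beta/\sqrt{r\alpha}$ and take $\asvector{\lambda} = (\lambda',\ldots,\lambda')$ to have $r$ identical entries. Then $\var[\qlam] = r(\lambda')^2\alpha/\beta^2 = \phi^2$, and $\scale(\qlam) = \lambda'/\beta = \phi/\sqrt{r\alpha} \leq \lambda$, where the final inequality uses the definition $r = \lceil \phi^2/(\lambda^2\alpha)\rceil$. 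Hence $\qlam \in \qabs(\lambda,\phi;\alpha,\beta)$, and choosing any two distinct indices $j \neq k$ yields $\lambda_j = \lambda_k = \lambda'$.

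Next, I would apply the scaling and additivity rules from Section~\ref{sec:gamma_properties}. Each $\lambda' X_i$ is $Gamma(\alpha,\beta/\lambda')$ and each $\lambda'\psi$ is $Gamma(1,\beta/\lambda')$, so the shared rate parameter $\beta/\lambda'$ allows the entire sum $\qlam + \lambda'\psi + \lambda'\psi'$ to be collapsed into a single $Gamma(r\alpha + 2,\, \beta/\lambda')$ random variable. A routine computation (differentiate the Gamma PDF \eqref{eqn:gammaPDF} twice and set the result to zero) shows that a $Gamma(a,b)$ density has inflection points at $x = \bigl((a-1) \pm \sqrt{a-1}\bigr)/b$, so the larger inflection point of the enriched distribution sits at $\bigl((r\alpha+1) + \sqrt{r\alpha+1}\bigr)\lambda'/\beta$. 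Subtracting the centering term $\mathbb{E}[\qlam] = r\alpha\lambda'/\beta$ and substituting $\lambda'/\beta = \phi/\sqrt{r\alpha}$ yields exactly $\phi(1+\sqrt{r\alpha+1})/\sqrt{r\alpha}$, establishing the claimed lower bound on the supremum $\xhatupper$.

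All of the steps above are elementary; I do not anticipate any real obstacle beyond the second-derivative manipulation and keeping track of the centering. The only minor subtlety is the degenerate case $r=1$, in which $\asvector{\lambda}$ has only a single nonzero entry and the index constraint $j \neq k$ forces padding with a zero. This weakens the enriched distribution to $Gamma(\alpha+1,\beta/\lambda')$ and the resulting bound to $\phi$; a short supplementary argument (for example, restricting attention to $r \geq 2$ or redistributing the variance over two identical coordinates at the cost of replacing $r$ with $\max(2,r)$) would be needed to recover the stated formula in this boundary case.
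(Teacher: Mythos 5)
Your proposal is correct and takes essentially the same route as the paper's own proof: the same choice of $r$ equal weights $\hat{\lambda}=\beta\phi/\sqrt{r\alpha}$, the same collapse of $\qlam+\hat{\lambda}\psi+\hat{\lambda}\psi'$ into a single $Gamma(r\alpha+2,\beta/\hat{\lambda})$, and the same inflection-point computation followed by centering. The $r=1$ boundary case you flag (which occurs only when $\lambda=\phi/\sqrt{\alpha}$, forcing $\lambda_k=0$ and weakening the enriched distribution to $Gamma(\alpha+1,\beta/\hat{\lambda})$) is a genuine subtlety that the paper's proof silently glosses over, so your extra care there is a point in your favor rather than a defect.
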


    See Appendix \ref{apx:abs_tail} for the proof. I'll leave an upper bound as a conjecture.

    \begin{conjecture}\label{conj:abs:possible_worst_case}
        The point $\xhatupper$ as defined in Theorem \ref{thm:majorization_absolute} satisfies 
        \begin{equation*}
            \xhatupper \leq \lambda + \sqrt{\phi^2+\lambda^2}.
        \end{equation*}
    \end{conjecture}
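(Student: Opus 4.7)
The plan is to control the sign of $f_Y''$ in the exterior region directly, exploiting the fact that the two exponential summands $\lambda_j\psi$ and $\lambda_k\psi'$ force $f_Y$ to satisfy a second-order linear ODE. Setting $Z = \qlam - \mathbb{E}[\qlam]$ so that $Y = Z + \lambda_j\psi + \lambda_k\psi'$, a direct integration by parts using $\psi\sim Gamma(1,\beta)$ shows that $f_{Z+\lambda_j\psi}$ satisfies $(\lambda_j/\beta) f'(y) + f(y) = f_Z(y)$; applying the same step a second time yields, on the interior of the support,
\begin{equation*}
    \frac{\lambda_j \lambda_k}{\beta^2}\, f_Y''(y) + \frac{\lambda_j + \lambda_k}{\beta}\, f_Y'(y) + f_Y(y) = f_Z(y).
\end{equation*}
Hence $f_Y$ is convex at $y$ precisely when $f_Z(y) - f_Y(y) \geq \frac{\lambda_j + \lambda_k}{\beta}\, f_Y'(y)$. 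Since $f_Y'(y) < 0$ in the upper tail, this condition is hardest to satisfy when $\lambda_j + \lambda_k$ is as large and positive as permitted, so it suffices to handle $\lambda_j = \lambda_k = \lambda\beta$; symmetry under $\asvector{\lambda}\mapsto -\asvector{\lambda}$ reduces the lower-tail inflection to the same computation.

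Next, I would use a majorization-style reduction in the spirit of Lemma \ref{lemma:transform} to show that the rightmost inflection point of $f_Y$ is largest when $\asvector{\lambda}$ is the extremal element of $\qabs$: a single eigenvalue $\lambda\beta$ with fractional multiplicity $r$ chosen so that $r\alpha = \phi^2/\lambda^2$, in the generalized family of Definition \ref{def:qabs_generalized}. For this choice, $\qlam \sim Gamma(r\alpha, 1/\lambda)$ and $Y + \mathbb{E}[\qlam] \sim Gamma(r\alpha + 2, 1/\lambda)$, whose upper inflection point sits at $(r\alpha+1)\lambda + \sqrt{r\alpha+1}\,\lambda$. Subtracting $\mathbb{E}[\qlam] = r\alpha\lambda$ yields $\lambda + \sqrt{r\alpha+1}\,\lambda = \lambda + \sqrt{\phi^2+\lambda^2}$, exactly matching the conjectured bound (and the lower bound of Theorem \ref{thm:abs:possible_worst_case} in the integer case).

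The main obstacle is the majorization step: Theorem \ref{thm:majorization_absolute} compares tail \emph{probabilities}, not locations of inflection points, so it does not directly certify that the extremal $\asvector{\lambda}$ also maximizes $\xhatupper$. The natural approach is to differentiate the ODE above with respect to a pairwise eigenvalue perturbation of the type in Lemma \ref{lemma:transform} and show that any candidate inflection point $y_0$ past the conjectured bound must move outward under such a perturbation toward the extremal spectrum. A useful simplification is that in the relevant region $f_Z$ decays at least as fast as $e^{-y/\lambda}$ (with rate exactly $1/\lambda$ in the extremal case), so the ODE effectively becomes an algebraic inequality between exponentials with known rates; localizing to that regime should make the perturbation argument concrete, though delimiting the boundary between this asymptotic tail and the intermediate region (where all components of $\qlam$ contribute nontrivially) is likely to be the technically hardest step.
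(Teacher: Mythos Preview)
The paper does not prove this statement: it is listed as a \emph{conjecture}, and the only supporting material (immediately after the proof of Theorem~\ref{thm:abs:possible_worst_case}) is the observation that the lower bound $\phi\,(1+\sqrt{r\alpha+1})/\sqrt{r\alpha}$ is itself at most $\lambda+\sqrt{\phi^2+\lambda^2}$, with equality in the limit $r\alpha\to\phi^2/\lambda^2$. So there is nothing to compare your proposal \emph{to}; any complete argument would be new. Your final paragraph correctly reproduces this motivating computation and identifies the conjectured extremizer.

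That said, your proposal is a plan rather than a proof, and you acknowledge the principal gap yourself: Theorem~\ref{thm:majorization_absolute} orders tail \emph{probabilities}, not inflection points, so nothing proven in the paper lets you transport the extremal spectrum to the quantity $\xhatupper$. Two further issues are worth flagging. First, the ODE $(\lambda_j\lambda_k/\beta^2)f_Y''+((\lambda_j+\lambda_k)/\beta)f_Y'+f_Y=f_Z$ is derived assuming $\lambda_j,\lambda_k>0$; when one of them is negative (which the definition of $\xhatupper$ allows) the first-order identity acquires a sign change, and when $\lambda_j\lambda_k<0$ your convexity criterion flips direction. Second, your heuristic that ``the condition is hardest to satisfy when $\lambda_j+\lambda_k$ is as large and positive as permitted'' is backwards as stated: with $f_Y'(y)<0$, increasing $\lambda_j+\lambda_k$ drives the right-hand side of $f_Z-f_Y\geq((\lambda_j+\lambda_k)/\beta)f_Y'$ \emph{down}, making convexity \emph{easier} to certify, not harder. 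The real difficulty is that $f_Y$ and $f_Y'$ themselves depend on $\lambda_j,\lambda_k$ (and on all of $\asvector{\lambda}$), so no monotonicity in the coefficients alone can settle the matter; you would need a genuine variational argument tracking how the rightmost zero of $f_Y''$ moves under the elementary transforms of Lemma~\ref{lemma:transform}, which is exactly the step the paper leaves open.
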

    
    \section{Application to trace estimation}

    In this section I'll rephrase the main theorems (Theorems \ref{thm:main:rel} and \ref{thm:majorization_absolute}) more directly in terms of Gaussian trace estimation.

    Here is the basic idea: the matrix $\mat{A}_{\mathrm{rel}}(\mu)$ from \eqref{def:worst_case_A_rel} (resp.~$\mat{A}_{\mathrm{abs}}(\lambda, \phi)$ from \eqref{def:worst_case_A_abs}) majorizes every other element of $\arel(\mu)$ (resp.~$\aabs(\lambda,\phi)$). Thus by the main theorems, these two matrices have the worst tail bounds. Employing the infinite division strategy of Section \eqref{sec:infinite_division} shows that the Gaussian trace estimators for these matrices are in turn tail-bounded by the Gamma distributions in \eqref{eqn:extrema_qrel} and \eqref{eqn:extrema_qabs}, respectively. Finally, increasing the sampling number $m$ will make the tails take up a larger portion of the distribution, increasing the domain over which the bounds in this paper are effective.

    First up is the relative error bound. Note that a little unit conversion is used here: if $\effrank(\mat{A}) = \mu$, then $\effshape(\tr_m^G(\mat{A})) = \frac{m\mu}{2}$.  

    \begin{theorem}\label{thm:matrix:rel}
        For nonzero SPSD $\mat{A}\in \mathbb{R}^{n\times n}$ and sampling number $m$, let $\mu = \effrank(\mat{A})$. Then there exists $\varepsilon_{\mathrm{rel}}$ such that for $\varepsilon \geq \varepsilon_{\mathrm{rel}}$, 
        \begin{align*}
            \prob\left(|\tr_m^G(\mat{A}) - \tr(\mat{A})| \geq \varepsilon\cdot \tr(\mat{A})\right) &\leq \prob\left(|\tr_m^G(\mat{A}_{\mathrm{rel}}) - 1| \geq \varepsilon\right) \\
            &\leq \prob(|X-1| \geq \varepsilon),
        \end{align*}
        where $\mat{A}_{\mathrm{rel}} = \mat{A}_{\mathrm{rel}}(\mu)$ is defined in \eqref{def:worst_case_A_rel} and $X\sim Gamma\left(\frac{m\mu}{2}, \frac{m\mu}{2} \right)$.
    \end{theorem}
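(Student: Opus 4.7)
The plan is to apply Theorem~\ref{thm:main:rel} twice, chained through majorization in a common family obtained by infinite divisibility. Since the relative error is scale-invariant, I would assume without loss of generality that $\tr(\mat{A}) = 1$, so that $\asvector{\lambda}_{\mat{A}} \in \arel(\mu)$ has entries bounded by $1/\mu$ and summing to $1$.

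For the first inequality, I would observe that both $\tr_m^G(\mat{A})$ and $\tr_m^G(\mat{A}_{\mathrm{rel}})$ are of the form $Q(\cdot; m/2, m/2)$ with effective shape $m\mu/2$, so both lie in $\qrel(m\mu/2; m/2, m/2)$. Any $\asvector{\lambda} \in \arel(\mu)$ satisfies $\sum_{i=1}^k \lambda_{[i]} \leq \min(k/\mu, 1)$, and this envelope is attained for every $k$ by $\asvector{\lambda}_{\mat{A}_{\mathrm{rel}}}$; hence $\asvector{\lambda}_{\mat{A}} \preceq \asvector{\lambda}_{\mat{A}_{\mathrm{rel}}}$. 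Theorem~\ref{thm:main:rel} with $\alpha = \beta = m/2$ then yields the first inequality (for upper and lower tails separately, hence for $|\cdot|$) for every $\varepsilon$ at least $\max(\xupper - 1,\, 1 - \xlower)$.

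For the second inequality I would embed $\tr_m^G(\mat{A}_{\mathrm{rel}})$ and $X \sim Gamma(m\mu/2, m\mu/2)$ in a common family $\qrel(m\mu/2; m/(2M), m/2)$ via the infinite-division strategy of Section~\ref{sec:infinite_division}: each $Gamma(m/2, m/2)$ summand in $\tr_m^G(\mat{A}_{\mathrm{rel}})$ is split into $M$ iid $Gamma(m/(2M), m/2)$ pieces, and $X = (1/\mu)Y$ with $Y \sim Gamma(m\mu/2, m/2)$ is split into $M\mu$ iid $Gamma(m/(2M), m/2)$ pieces. In that common family $X$ is represented by $M\mu$ identical entries equal to $1/\mu$, while $\tr_m^G(\mat{A}_{\mathrm{rel}})$ is represented by $M\lfloor\mu\rfloor$ copies of $1/\mu$ and $M$ copies of $(\mu-\lfloor\mu\rfloor)/\mu$; both sum to $M$, and the constant vector (all entries at the common maximum) majorizes the other. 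A second application of Theorem~\ref{thm:main:rel} delivers the second inequality; setting $\varepsilon_{\mathrm{rel}}$ equal to the maximum of the two resulting thresholds provides the single constant the statement requires.

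The main obstacle is the bookkeeping for irrational $\mu$, where $M\mu$ is never an integer. I would handle this by approximating $\mu$ by a rational sequence $\mu_n \to \mu$, applying the argument above to the corresponding $\mat{A}_{\mathrm{rel}}(\mu_n)$ and $X_n$, and then passing to the limit using the density argument at the end of Section~\ref{sec:infinite_division} together with continuity of the tail probabilities in the parameters. A small but necessary auxiliary observation is that coordinate replication preserves majorization, so the chain $\asvector{\lambda}_{\mat{A}} \preceq \asvector{\lambda}_{\mat{A}_{\mathrm{rel}}} \preceq \asvector{\lambda}_X$ remains intact in any fine common family used above.
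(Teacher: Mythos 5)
Your proposal is correct and follows essentially the same route the paper takes (the paper only sketches this argument in the ``basic idea'' paragraph preceding the theorem): majorization of $\asvector{\lambda}_{\mat{A}}$ by $\asvector{\lambda}_{\mat{A}_{\mathrm{rel}}}$ within $\arel(\mu)$ for the first inequality, then infinite division to place $\tr_m^G(\mat{A}_{\mathrm{rel}})$ and the $Gamma\left(\tfrac{m\mu}{2},\tfrac{m\mu}{2}\right)$ variable in a common family for the second, with Theorem~\ref{thm:main:rel} applied in both steps. Your handling of non-integer $M\mu$ by rational approximation and a limiting argument is consistent with the density argument the paper invokes at the end of Section~\ref{sec:infinite_division}.
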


    \begin{conjecture}
        $\varepsilon_{\mathrm{rel}} \leq 2/(m\mu)$. 
    \end{conjecture}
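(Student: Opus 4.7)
The plan is to read $\varepsilon_{\mathrm{rel}}$ off directly as the tail threshold supplied by Theorem \ref{thm:main:rel}, and then invoke Conjecture \ref{conjecture:rel_main_Q} to bound that threshold. Since Conjecture \ref{conjecture:rel_main_Q} is itself open, this argument will necessarily be conditional on it.

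First I would identify $\varepsilon_{\mathrm{rel}}$ explicitly by unpacking Theorem \ref{thm:matrix:rel}. The two displayed inequalities there come from (i) comparing $\tr_m^G(\mat{A})$ to $\tr_m^G(\mat{A}_{\mathrm{rel}})$ via Theorem \ref{thm:main:rel} inside $\qrel(m\mu/2;\, m/2,\, m/2)$, and (ii) comparing $\tr_m^G(\mat{A}_{\mathrm{rel}})$ to $X\sim Gamma(m\mu/2,\, m\mu/2)$ by the infinite-division embedding of Section \ref{sec:infinite_division} into $\qrel(m\mu/2;\, m/(2T),\, m/2)$ and passing to the limit $T\to\infty$. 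Each comparison is valid for $x$ outside the tail region $[\xlower,\,\xupper]$ of Theorem \ref{thm:main:rel}, so it suffices to take $\varepsilon_{\mathrm{rel}} = \max(\xupper - 1,\, 1 - \xlower)$ uniformly over the two shape parameters that appear.

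Next I would translate units and apply the conjecture. Since $\effrank(\mat{A}) = \mu$ implies $\effshape(\tr_m^G(\mat{A})) = m\mu/2$, the parameter named ``$\mu$'' in Conjecture \ref{conjecture:rel_main_Q} takes the value $m\mu/2$ in the present context. The conjecture then yields $\xupper \leq 1 + 2/(m\mu)$ and $\xlower \geq 1 - 2/(m\mu)$. Crucially, this bound is independent of the ambient shape $\alpha$, so it survives the limit $m/(2T)\to 0$ used in step (ii). Taking the maximum gives $\varepsilon_{\mathrm{rel}} \leq 2/(m\mu)$ as claimed.

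The main obstacle is of course Conjecture \ref{conjecture:rel_main_Q} itself: one must show that for every $\qlam \in \qrel(\mu;\alpha,\beta)$ and every $j\neq k$, the mode of the PDF of $\qlam + \lambda_j\psi + \lambda_k\psi'$ lies in $[1 - \mu^{-1},\, 1 + \mu^{-1}]$. A natural strategy is a Lagrangian perturbation argument showing the mode is extremized when $\asvector{\lambda}$ concentrates into a single value $\mu^{-1}$ of large multiplicity followed by a zero tail, after which the mode reduces to that of a pure shifted Gamma sum and the bound follows by direct computation. The obstruction hinted at by Theorem \ref{thm:rel_main_Q} is the asymmetric behavior of the auxiliary exponentials $\psi,\psi'$: whether $\lambda_j$ and $\lambda_k$ are chosen from the support of $\asvector{\lambda}$ or from padding zeros gives genuinely different mode computations, and controlling both cases simultaneously at the tight value $\mu^{-1}$ (rather than the weaker $(\alpha\lceil\mu/\alpha\rceil)^{-1}$ of Theorem \ref{thm:rel_main_Q}) is the step where a clean proof appears to be missing.
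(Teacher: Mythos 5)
Your argument is the paper's own (implicit) reasoning, and the bookkeeping in it is correct: $\varepsilon_{\mathrm{rel}}$ is indeed $\max(\xupper-1,\,1-\xlower)$ for the families appearing in the proof of Theorem~\ref{thm:matrix:rel}; the unit conversion $\effrank(\mat{A})=\mu \Rightarrow \effshape(\tr_m^G(\mat{A}))=m\mu/2$ turns the bound $\mu^{-1}$ of Conjecture~\ref{conjecture:rel_main_Q} into $2/(m\mu)$; and your observation that the conjectured bound is independent of the shape parameter $\alpha$ is exactly what is needed for it to survive the infinite-division limit $\alpha = m/(2T)\to 0$ used to reach the $Gamma(\tfrac{m\mu}{2},\tfrac{m\mu}{2})$ comparison. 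So as a conditional reduction, this is right and matches the paper.

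But it is not a proof, and you should be clear-eyed that you have made no progress on the statement itself: the entire mathematical content has been transferred onto Conjecture~\ref{conjecture:rel_main_Q}, which the paper explicitly leaves open \textemdash{} that is precisely why the statement you were asked to prove is labelled a conjecture rather than a corollary. Your closing sketch does not close this gap. A ``Lagrangian perturbation argument showing the mode is extremized when $\asvector{\lambda}$ concentrates into a single value'' is exactly the hard step: Theorem~\ref{thm:rel_main_Q} already computes the mode for the fully clustered configuration (giving $1+(\alpha\lceil\mu/\alpha\rceil)^{-1}$, which exceeds $1+\mu^{-1}$ whenever $\mu/\alpha$ is not an integer \textemdash{} note this means the clustered configuration is \emph{not} always within the conjectured bound stated with parameter $\mu$, only with $\alpha\lceil\mu/\alpha\rceil$), and the open question is whether some other admissible $(\asvector{\lambda},j,k)$, in particular one where $\lambda_j$ or $\lambda_k$ is a padded zero or where the weights are unequal, pushes the mode further out. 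You correctly identify this as the obstruction, but identifying an obstruction is not overcoming it. If you want to present this, present it as ``Conjecture~\ref{conjecture:rel_main_Q} implies the claim,'' not as a proof.
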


    For comparison, Corollary \ref{cor:cort_kress_rel} holds for all $\varepsilon > 0$, but with marginally weaker bounds.

    Next is the absolute error bound. Again there is a bit of unit conversion: if $\|\mat{A}\|_2 = \lambda$ and $\|\mat{A}\|_F = \phi$, then $\scale(\tr_m^G(\mat{A})) = \frac{2\lambda}{m}$ and $\var[\tr_m^G(\mat{A})] = \frac{2}{m}\phi^2$. 

    \begin{theorem}\label{thm:matrix:abs}
        Let $\mat{A}\in \mathbb{R}^{n\times n}$ be symmetric with $\|\mat{A}\|_2 = \lambda$ and $\|\mat{A}\|_F = \phi$. For fixed sampling number $m$, there exists $\varepsilon_{\mathrm{abs}}$ such that for $\varepsilon \geq \varepsilon_{\mathrm{abs}}$, 
        \begin{align*}
            \prob\left(|\tr_m^G(\mat{A}) - \tr(\mat{A})| \geq \varepsilon\right) &\leq  2\,\prob\left(\tr_m^G(\mat{A}_{\mathrm{abs}}) - \tr(\mat{A}_{\mathrm{abs}}) \geq \varepsilon\right) \\
            &\leq 2\,\prob(X -\mathbb{E}[X] \geq \varepsilon),
        \end{align*}
        where $\mat{A}_{\mathrm{abs}} = \mat{A}_{\mathrm{abs}}(\lambda, \phi)$ is defined in \eqref{def:worst_case_A_abs} and $X\sim Gamma\left(\frac{m\rho}{2}, \frac{m}{2\lambda} \right)$ with $\rho = \stabrank(\mat{A}) = \phi^2/\lambda^2$.
    \end{theorem}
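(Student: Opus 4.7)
The plan is to apply Theorem~\ref{thm:majorization_absolute} in two stages. In the first stage I chain $\mat{A}$ (and separately $-\mat{A}$) to $\mat{A}_{\mathrm{abs}}$ under the absolute majorization order $\precabs$; in the second stage I chain $\tr_m^G(\mat{A}_{\mathrm{abs}})$ to the Gamma random variable $X$ via the infinite-division machinery of Section~\ref{sec:infinite_division}. After the unit conversion noted before the theorem, the first stage takes place in the family $\qabs(2\lambda/m,\, \phi\sqrt{2/m};\, m/2,\, m/2)$ and the second in the generalized family $\qabs(2\lambda/m,\, \phi\sqrt{2/m})$ of Definition~\ref{def:qabs_generalized}. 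The threshold $\varepsilon_{\mathrm{abs}}$ will be the largest of the $\hat{x}_{\mathrm{upper}}$ values supplied by Theorem~\ref{thm:majorization_absolute} across the two stages.

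For the first inequality I would verify $\asvector{\lambda}_{\mat{A}} \precabs \asvector{\lambda}_{\mat{A}_{\mathrm{abs}}}$ directly from Definition~\ref{defn:abs_majorization}. Equation~\eqref{def:mm_sum} is immediate since both matrices have Frobenius norm $\phi$, and~\eqref{def:mm_neg} is trivial because $\mat{A}_{\mathrm{abs}}$ is PSD. For~\eqref{def:mm_pos}, the squared positive eigenvalues of $\mat{A}_{\mathrm{abs}}$ are $\lfloor\rho\rfloor$ copies of $\lambda^2$ followed by $\lambda^2(\rho-\lfloor\rho\rfloor)$, whereas each squared positive eigenvalue of $\mat{A}$ is bounded by $\lambda^2 = \|\mat{A}\|_2^2$ with total mass at most $\phi^2$; a partial-sum comparison now yields weak majorization. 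The same verification works with $\mat{A}$ replaced by $-\mat{A}$, since the positive and negative parts swap while the entrywise bound $\lambda$ is preserved. Applying Theorem~\ref{thm:majorization_absolute} to $\mat{A}$ controls the upper tail, and applying it to $-\mat{A}$ (using $\tr_m^G(-\mat{A}) = -\tr_m^G(\mat{A})$) controls the lower tail; summing the two bounds produces the factor of $2$.

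For the second inequality I would move into $\qabs(2\lambda/m,\, \phi\sqrt{2/m})$, which contains both $\tr_m^G(\mat{A}_{\mathrm{abs}})$ and $X$ (the latter has scale $2\lambda/m$ and variance $\tfrac{2}{m}\phi^2$, as required). Splitting each summand by infinite division into $T$ i.i.d.\ pieces of shape $m/(2T)$, both distributions acquire representations in the common finite-shape family $\qabs(2\lambda/m,\, \phi\sqrt{2/m};\, m/(2T),\, m/2)$. For large $T$ the weight vector for $X$ is a long block of equal entries $2\lambda/m$ (with one fractional tail, exactly mirroring the structure that $\mat{A}_{\mathrm{abs}}(\lambda,\phi)$ itself already embodies), and this flat configuration absolutely majorizes the weight vector for $\tr_m^G(\mat{A}_{\mathrm{abs}})$. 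Theorem~\ref{thm:majorization_absolute} applied at each $T$ together with a limit in $T$ then delivers the desired bound.

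The main obstacle is the limiting step: the natural decomposition of $X$ uses $T\rho$ weights of size $2\lambda/m$, which is generally not an integer, so one must work with $\lfloor T\rho\rfloor$ full pieces plus a single fractional piece and justify a continuity argument in $T$ for the resulting tail probabilities. A smaller but still nontrivial point is choosing a single $\varepsilon_{\mathrm{abs}}$ that exceeds the $\hat{x}_{\mathrm{upper}}$ thresholds of every family visited along the chain; Conjecture~\ref{conj:abs:possible_worst_case} would furnish a clean closed-form value, but the existential phrasing of the theorem only requires finiteness.
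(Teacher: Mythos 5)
Your proposal takes essentially the same route as the paper, which itself only sketches this argument in the paragraph opening the application section: chain $\mat{A}$ and $-\mat{A}$ to $\mat{A}_{\mathrm{abs}}$ under $\precabs$ and invoke Theorem~\ref{thm:majorization_absolute} (the factor of $2$ coming from handling the two tails separately), then pass to $X$ via the infinite-division argument of Section~\ref{sec:infinite_division}. Your verification of the majorization conditions is correct, and the caveats you flag (the non-integrality of $T\rho$ and the continuity argument in the limit, plus the choice of a single $\varepsilon_{\mathrm{abs}}$) are precisely the points the paper also leaves informal.
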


    \begin{conjecture}\label{conj:matrix:abs}
        $\varepsilon_{\mathrm{abs}} \leq \frac{2\lambda}{m} + \sqrt{\frac{2}{m}\phi^2 + \left(\frac{2\lambda}{m}\right)^2}$.
    \end{conjecture}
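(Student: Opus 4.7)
The plan is to derive Conjecture \ref{conj:matrix:abs} as a direct corollary of Conjecture \ref{conj:abs:possible_worst_case}, via the unit-conversion dictionary the paper has already set up. By the discussion immediately before Theorem \ref{thm:matrix:abs}, the trace estimator $\tr_m^G(\mat{A})$ lies in the family $\qabs(\tfrac{2\lambda}{m},\,\phi\sqrt{\tfrac{2}{m}};\,\tfrac{m}{2},\tfrac{m}{2})$, and its $\varepsilon_{\mathrm{abs}}$ is exactly the $\xhatupper$ from Theorem \ref{thm:majorization_absolute} for that choice of parameters. Substituting $\lambda\mapsto 2\lambda/m$ and $\phi\mapsto \phi\sqrt{2/m}$ into the bound of Conjecture \ref{conj:abs:possible_worst_case} then yields
\begin{equation*}
\varepsilon_{\mathrm{abs}} \;\leq\; \tfrac{2\lambda}{m} + \sqrt{\tfrac{2}{m}\phi^2 + \left(\tfrac{2\lambda}{m}\right)^2},
\end{equation*}
which is exactly the claimed bound. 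So the real burden is to settle Conjecture \ref{conj:abs:possible_worst_case}; the matrix-level statement then follows mechanically.

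For the abstract conjecture, the approach I would try is direct analysis of the PDF $g$ of $\qlam + \lambda_j\psi + \lambda_k\psi' - \mathbb{E}[\qlam]$, aiming to show $g''(x)\geq 0$ whenever $|x|\geq \lambda + \sqrt{\phi^2 + \lambda^2}$, uniformly over $\qabs(\lambda,\phi;\alpha,\beta)$. The heuristic motivation is that a single $Gamma(\alpha,\beta)$ density has inflection points at its mode shifted by $\pm\sqrt{\alpha-1}/\beta$, that is, roughly mean $\pm$ one standard deviation. The total variance of the convolved variable is $\phi^2 + \lambda_j^2 + \lambda_k^2 \leq \phi^2 + 2\lambda^2$, while the one-sided bias from a single heavy weight $\lambda$ is at most $\lambda/\beta$. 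A plausible extremal configuration arises in the infinite-division limit $\alpha/T\to 0$, where the bulk of $\qlam$ is nearly Gaussian with variance $\phi^2$ and a single exponential term of weight $\lambda$ provides an asymmetric one-sided bump; the largest inflection point in that limit should reduce the claim to the algebraic identity $(\lambda + \sqrt{\phi^2 + \lambda^2})^2 \geq \phi^2 + 2\lambda^2$, which is immediate.

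The main obstacle is uniformity: proving that the supremum of the inflection point over the whole class is actually approached at such a simple configuration, rather than at some intricate mixture in which several moderate weights conspire to push the convex region outward. The natural tool is the majorization machinery of Lemma \ref{lemma:transform}: one would try to show that each $\precabs$-step monotonically shifts the outermost inflection point outward, reducing the problem to the extremal distribution associated with $\mat{A}_{\mathrm{abs}}(\lambda,\phi)$, after which the inflection-point analysis becomes a direct calculation on a single Gamma density plus two independent exponential perturbations. Making that monotonicity rigorous, while respecting the infinite-divisibility relaxation of Section \ref{sec:infinite_division}, is precisely the gap that has so far kept this from being promoted to a theorem.
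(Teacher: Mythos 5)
The statement you are addressing is left as a conjecture in the paper; there is no proof to compare against. Your first step --- reducing Conjecture~\ref{conj:matrix:abs} to Conjecture~\ref{conj:abs:possible_worst_case} via the substitution $\lambda\mapsto 2\lambda/m$, $\phi\mapsto\phi\sqrt{2/m}$ (with $\alpha=\beta=m/2$) --- is correct and is exactly the relationship the paper intends: the form of the matrix-level bound is obtained from the abstract bound by precisely this unit conversion, and the bound $\lambda+\sqrt{\phi^2+\lambda^2}$ is independent of $\alpha$ and $\beta$, so it survives the infinite-division relaxation of Section~\ref{sec:infinite_division} that Theorem~\ref{thm:matrix:abs} relies on. So the ``mechanical'' half of your argument is sound.

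The genuine gap is that you have not proved Conjecture~\ref{conj:abs:possible_worst_case}, and you say so yourself. Two specific points are worth flagging. First, your proposed route --- showing that each $\precabs$-step pushes the outermost inflection point outward, so that the supremum is attained at the extremal Gamma-plus-exponentials configuration --- does not follow from Theorem~\ref{thm:majorization_absolute}: that theorem orders CDFs only on the region where \emph{all} densities in the family are already known to be convex, which is circular if the goal is to locate where that region begins. A pointwise ordering of CDFs in the tail does not by itself control where a particular density's second derivative changes sign. Second, your heuristic limit (Gaussian bulk of variance $\phi^2$ plus one exponential of weight $\lambda$) is consistent with the paper's only stated evidence --- namely that the \emph{lower} bound of Theorem~\ref{thm:abs:possible_worst_case} satisfies $\phi(1+\sqrt{r\alpha+1})/\sqrt{r\alpha}\leq\lambda+\sqrt{\phi^2+\lambda^2}$ --- but it is evidence for plausibility, not a proof that no other mixture in $\qabs(\lambda,\phi)$ has a larger outermost inflection point. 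As it stands, your proposal reduces one open conjecture to another and sketches a strategy whose key monotonicity step is unestablished; it should not be read as a proof.
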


    For comparison, Theorem \ref{thm:cort_kress_abs} holds for all $\varepsilon > 0$, but with marginally weaker bounds. 
    
    The factor of 2 appearing in the bounds in Theorem \ref{thm:matrix:abs} is due to the fact that the result uses $\mat{A}_{\mathrm{abs}}$ for the upper tail bound and $-\mat{A}_{\mathrm{abs}}$ for the lower tail bound (resp.~$X$ and $-X$). Note also that in Conjecture \ref{conj:matrix:abs} the terms $\phi$ and $\lambda$ scale differently as the sampling number $m$ increases. If true, the conjecture implies that $\varepsilon_{\mathrm{abs}}$ converges to $\phi$ as $m\rightarrow \infty$.

    \section{Conclusion}

    So what exactly are the practical implications of all of this? One takeaway, following from Theorem \ref{thm:matrix:rel}, is that the relative error bound for SPSD matrices depends on $m\cdot \effrank(\mat{A})$, so having a large effective rank is just as beneficial as using a large sampling number. One particular application is in estimating the Frobenius norm of a matrix. The worst-case bounds of \cite{roosta2015gamma} hold when the matrix has rank one, but with a lower bound on the stable rank of $\mat{A}$ these bounds can be tightened. This could in turn reduce the number of samples required to estimate the Frobenius norm to a given tolerance, as in \cite[Lemma 2.2]{persson2023hutch}.

    Another takeaway is that the bounds by Cortinovis and Kressner in Theorem \ref{thm:cort_kress_abs} and Corollary \ref{cor:cort_kress_rel} are already quite good. The improvements made in this paper, which establishes bounds that are tight given certain information about $\mat{A}$, are fairly minor. If you want any further improvements to these tail bounds, you will have to use more information about the spectrum of $\mat{A}$.

    In practice, you probably shouldn't use the Gaussian trace estimator on its own. If your matrix has a small stable rank or effective rank, you'd do better to combine the trace estimator with a low-rank approximation such as in \cite{meyer2021hutch, persson2023hutch, epperly2024xtrace}. Furthermore, you can reduce the variance by using the Hutchinson estimator (sampling vectors with random $\pm 1$ entries), or by normalizing the Gaussian vectors to have unit length \cite{epperly2024trace}. The main proof techniques in this paper do not work on the Hutchinson estimator because it has a discrete distribution. The techniques could potentially be applied to the normalized Gaussian estimator, but the proofs will be more complicated. 

    Finally, this paper successfully applies the proof techniques of \cite{szekely2003gaussian, roosta2015gamma, roosta2015schur} to obtain a majorization result and tight tail bounds for matrices with fixed $2$-norm and Frobenius norm. The application to trace estimation may not yield results significantly better than those that can be obtained through concentration inequalities, but the approach\textemdash comparing the CDFs of distributions to solve an optimization problem directly\textemdash is markedly different.

	\appendix
        \section{Proofs} \label{sec:appendix}
        \begin{proof}[Proof of Lemma \ref{lemma:transform}]
            Recall from Definition \ref{defn:abs_majorization} that $\asvector{\mu} \precabs \asvector{\lambda}$ if 
            \begin{align*}
                (\asvector{\mu}^{+})^2 \preceq_w (\asvector{\lambda}^+)^2, \tag{\ref{def:mm_pos} revisited}\\
                (\asvector{\lambda}^{-})^2 \preceq_w (\asvector{\mu}^-)^2,\tag{\ref{def:mm_neg} revisited}\\
                \sum_{i=1}^n \mu_i^2 = \sum_{i=1}^n \lambda_i^2. \tag{\ref{def:mm_sum} revisited}
            \end{align*}

            First suppose that the inequalities in \eqref{def:mm_pos} and \eqref{def:mm_neg} are strict (either both are, or neither). Find the leading slack indices (see Definition \ref{def:slack_index}) $j^+$ and $j^-$ for each inequality. Increase the corresponding nonnegative coordinate\footnote{If no such coordinate exists, pad $\asvector{\mu}$ with zeros as needed. For example, if  $\asvector{\lambda} = (2,2)$ and $\asvector{\mu} = (-2, -2)$, we would get the sequence $(-2,-2,0)\mapsto (-2, 0, 2)\mapsto (0, 2, 2)$. Such padding allows us to keep the sum of squares constant while changing the vectors continuously.} of $\asvector{\mu}$ while shrinking the negative coordinate toward zero, keeping their sum-of-squares constant, until one of the slack inequalities becomes an equality. Repeat a finite number of times to eliminate all slack.

            Then, apply Lemma \eqref{lemma:transform_spsd} to the cases $(\asvector{\mu}^+)^2 \preceq (\asvector{\lambda}^+)^2$ and $(\asvector{\lambda}^-)^2 \preceq (\asvector{\mu}^-)^2$ separately.
        \end{proof}

        \subsection{Relative error majorization theorem} \label{sec:appendix:relative_error}

        As a reminder, most of this proof is substantially the same as the one given in \cite{roosta2015schur}. I provide it here in part to show how the proof techniques relate to those used for the absolute error case. 

        \begin{proof}[Proof of Theorem \ref{thm:main:rel}]
        Lemma \ref{lemma:transform_spsd} implies that we can without loss of generality assume that $\asvector{\mu}$ and $\asvector{\lambda}$ differ in two coordinates only, satisfying $0\leq \lambda_k < \mu_k \leq \mu_j < \lambda_j$. For $t\in [0,1]$, define
        \begin{align*}
            \nu_i(t) &:= t\lambda_i + (1-t)\mu_i, \quad i \in \{j,k\}, \\
            \nu_i(t) &:= \lambda_i, \quad i\neq j,\\
            Y(t) &:= \sum_{i=1}^n \nu_i(t)X_i.
        \end{align*}
        This interpolation satisfies $Y(0) = \qmu$ and $Y(1) = \qlam$. Our goal is to show that for certain fixed values of $x$, the CDF $F_{Y(t)}(x)$ is monotonic for $t\in [0,1]$. 

        Take the Laplace transform of $F_{Y(t)}$ as
        \begin{align*}
        \begin{split} 
            J(t,z) := \mathcal{L}[F_{Y(t)}](z) &= \int_0^\infty e^{-zx}F_{Y(t)}(x)\,dx\\
            &= \frac{-1}{z}\int_0^\infty F_{Y(t)}(x)\,d\left(e^{-zx}\right)\\
            &= \frac{1}{z}\int_0^\infty e^{-zx}\,dF_{Y(t)}(x)\\
            &= \frac{1}{z}\mathcal{L}[Y(t)](z),
        \end{split}        
        \end{align*}
        where $\mathcal{L}[Y(t)](z):= \mathbb{E}[e^{-zY(t)}]$, the Laplace transform of $Y(t)$, satisfies
        \begin{equation}\label{eqn:laplace:rel}
            \mathcal{L}[Y(t)](z) = \prod_{i=1}^n\left(1 + \frac{\nu_i(t)z}{\beta}\right)^{-\alpha},\quad z \in \mathbb{C},\ \Re(z) > -\min_{1\leq i \leq n}\frac{\beta}{\nu_i(t)}.
        \end{equation}
        Differentiating with respect to $t$ yields
        \begin{align*}
            \frac{\partial }{\partial t}J(t,z) &= J(t,z)\frac{\partial}{\partial t}\ln J(t,z) \\
            &= J(t,z) \frac{\partial}{\partial t}\sum_{i\in \{j,k\}}\left(-\alpha \ln\left(1 + \frac{\nu_i(t)z}{\beta}\right) \right)\\
            &= -J(t,z)z\frac{\alpha}{\beta}\sum_{i\in \{j,k\}}\frac{\lambda_i-\mu_i}{1 + \frac{\nu_i(t)z}{\beta}}.
        \end{align*}
        Recall that $\lambda_j - \mu_j = \mu_k - \lambda_k > 0$ (since $\asvector{\mu}\preceq \asvector{\lambda}$), and see that 
        \begin{equation}\label{eqn:fraction_sum}
            \frac{1}{1 + \frac{\nu_j(t)z}{\beta}} - \frac{1}{1 + \frac{\nu_k(t)z}{\beta}} = (\nu_k(t)-\nu_j(t))\frac{z}{\beta}\prod_{i\in \{j,k\}} \left(1 + \frac{\nu_i(t)z}{\beta}\right)^{-1}.
        \end{equation}
        It follows that
        \begin{equation*}
            \frac{\partial J}{\partial t}(t,z) = J(t,z)z^2\frac{\alpha}{\beta^2} \frac{(\lambda_j-\mu_j)(\nu_j(t)-\nu_k(t))}{\prod_{i\in \{j,k\}} \left(1 + \frac{\nu_i(t)z}{\beta}\right)}.
        \end{equation*}
        Substitute $J(t,z) := \mathcal{L}[F_{Y(t)}](z)$ and apply the inverse Laplace transform to get        
        \begin{align*}
            \frac{\partial}{\partial t}F_{Y(t)}(x) &= \frac{\alpha}{\beta^2}(\lambda_j-\mu_j)(\nu_j(t)-\nu_k(t))\frac{\partial^2}{\partial x^2}\prob\left(Y(t) + \nu_j(t)\psi + \nu_k\psi'  \leq x\right)\\
            &= \frac{\alpha}{\beta^2}(\lambda_j-\mu_j)(\nu_j(t)-\nu_k(t))\frac{\partial}{\partial x}f_{Y(t) + \nu_j(t)\psi + \nu_k\psi'}(x),
        \end{align*}
        where $\psi, \psi' \simiid Gamma(1,\beta)$ are \iid~exponential \rv's which are also independent of $Y(t)$, and $f(x)$ is the probability density function (PDF). Now $\lambda_j > \mu_j$ by assumption, and for any $t\in [0,1]$ it holds that $\nu_j(t) \geq \nu_k(t)$ as well. Thus for each $t\in [0,1]$ the left-hand side $\frac{\partial}{\partial t}F_{Y(t)}(x)$ has the same sign as $\frac{\partial}{\partial x}f_{Y(t) + \nu_j(t)\psi + \nu_k\psi'}(x)$, the derivative of the density function. 

        It is known that the distribution of any linear combination of Gamma random variables is unimodal (see \cite[Thm.~4]{szekely2003gaussian}), so by the definition of $\xupper$ and $\xlower$, the density function is increasing on $(0,\xlower)$ and decreasing on $(\xupper, \infty)$. Therefore, for any $x$ in either of these regions, $F_{Y(t)}(x)$ is monotonic with respect to $t\in [0,1]$. Since $Y(0) = \qmu$ and $Y(1) = \qlam$, the desired inequalities follow.
        \end{proof}

        \subsection{Absolute error majorization theorem} \label{sec:appendix:absolute_error}

        \begin{proof}[Proof of Theorem \ref{thm:majorization_absolute}]
             Lemma \ref{lemma:transform} implies that we can without loss of generality assume that $\asvector{\lambda}$ and $\asvector{\mu}$ differ in two coordinates, satisfying $\lambda_j^2 - \mu_j^2 = \mu_k^2 - \lambda_k^2 > 0$. For $t\in [0,1]$, define
		\begin{align*}
			\begin{split}
				\nu_i(t) &:= \mathrm{sgn}(\lambda_i+\mu_i)\sqrt{t\lambda_i^2 + (1-t)\mu_i^2},\quad i \in \{j,k\},\\
				\nu_i(t) &:= \lambda_i, \quad i \neq j,\,k,\\
				Y(t) &:= \sum_{i=1}^n \nu_i(t)(X_i-\alpha/\beta).
			\end{split}
		\end{align*}
        Note that $\mathbb{E}[Y(t)] = 0$ by design. As for the term $\mathrm{sgn}(\lambda_i+\mu_i)$, Lemma \ref{lemma:transform} implies that we need only consider cases where $\lambda_i$ and $\mu_i$ are both nonpositive or both nonnegative. This slightly awkward term therefore ensures that $Y(0) = \qmu$ and $Y(1) = \qlam$. We also note that for $i \in \{j,k\}$ and $\nu_i(t)\neq 0$, we have 
		\begin{equation}\label{eqn:nu_prime}
			\ddt\nu_i(t) = \frac{\lambda_i^2-\mu_i^2}{2\nu_i(t)}.
		\end{equation}
        Our goal is again to show that for certain fixed values of $x$, the CDF $F_{Y(t)}$ is monotonic for $t\in [0,1]$.
        
        Take the Laplace transform of $F_{Y(t)}$ as
		\begin{equation*}
			J(t,q) := \mathcal{L}[F_{Y(t)}](z) = \frac{1}{z}\mathcal{L}[Y(t)](z),
		\end{equation*}
        where $\mathcal{L}[Y(t)](z):= \mathbb{E}[e^{-zY(t)}]$, the Laplace transform of $Y(t)$, satisfies
		\begin{equation*}
			\mathcal{L}[Y(t)](z) = \prod_{i=1}^ne^{z\nu_i(t)\alpha/\beta}\left(1 + \frac{\nu_i(t)z}{\beta}\right)^{-\alpha},
		\end{equation*}
		defined over the strip
		\begin{equation}\label{eqn:strip:abs}
			z\in \mathbb{C}, \quad  \frac{-\beta}{\max_{\nu_i(t) > 0}\nu_i(t)} < \Re(z) < \frac{\beta}{\max_{\nu_i(t) < 0}|\nu_i(t)|}.
		\end{equation}
        As compared with \eqref{eqn:laplace:rel}, the extra term $e^{z\nu_i(t)\alpha/\beta}$ comes from our using the centered variables $X_i-\alpha/\beta$. The region of convergence in \eqref{eqn:strip:abs} is a strip rather than a half-plane because $Y(t)$ may include both positive and negative weights $\nu_i(t)$.

		Differentiating $J(t,z)$ with respect to $t$ and using \eqref{eqn:nu_prime} yields
		\begin{align*}
			\frac{\partial }{\partial t}J(t, z) &= J(t,z)\frac{\partial}{\partial t}\ln J(t,z) \\
					&= J(t,z)\sum_{i\in \{j,k\}}\frac{\partial}{\partial t}\left( z\nu_i(t)\frac{\alpha}{\beta} - \alpha \ln\left(1 + \frac{\nu_i(t)z}{\beta}\right)\right)\\
					&= J(t,z)\sum_{i\in \{j,k\}}z\frac{\alpha}{\beta}\left(1 - \frac{1}{1+\tfrac{\nu_i(t)z}{\beta}}\right)\ddt\nu_i(t)\\
					&= J(t,z)z^2\frac{\alpha}{2\beta^2}\sum_{i\in \{j,k\}}\frac{\lambda_i^2-\mu_i^2}{1 + \nu_i(t)z/\beta}.
		\end{align*}
        Recall that $\lambda_j^2 - \mu_j^2 = \mu_k^2 - \lambda_k^2 > 0$, and again use \eqref{eqn:fraction_sum} to get
        \begin{equation*}
            \frac{\partial }{\partial t}J(t, z) = J(t,z)z^3\frac{\alpha}{2\beta^3}\frac{(\lambda_j^2-\mu_j^2)(\nu_k(t)-\nu_j(t))}{\left(1+\tfrac{\nu_j(t)z}{\beta}\right)\left(1+\tfrac{\nu_k(t)z}{\beta}\right)}.
        \end{equation*}
		Taking the inverse transform then gives
		\begin{align*}
			\frac{\partial}{\partial t} F_{Y(t)}(x) &= \frac{\alpha}{2\beta^3}(\lambda_j^2-\mu_j^2)(\nu_k(t)-\nu_j(t))\frac{\partial^3}{\partial x^3}\mathrm{Pr}\left(Y(t) + \nu_j(t)\psi + \nu_k(t)\psi' \leq x\right) \\
			&= \frac{\alpha}{2\beta^3}(\lambda_j^2-\mu_j^2)(\nu_k(t)-\nu_j(t))\frac{\partial^2}{\partial x^2}f_{Y(t)+\nu_j(t)\psi + \nu_k(t)\psi'}(x),
		\end{align*}
		where $\psi, \psi'\simiid Gamma(1,\beta)$ are \iid~exponential \rv's which are also independent of $Y(t)$, and $f(x)$ is the probability density function (PDF). 

        By design, $\lambda_j^2 - \mu_j^2 > 0$. Checking the three cases considered in Lemma \ref{lemma:transform} shows also that $\nu_k(t) - \nu_j(t) \leq 0$ for $t\in [0,1]$. Thus the sign of $\frac{\partial}{\partial t}F_{Y(t)}(x)$ is always opposite that of $\frac{\partial^2}{\partial x^2}f_{Y(t)+\nu_j(t)\psi_j + \nu_k(t)\psi_k}(x)$, which is the convexity of the density function. By the definition of $\xhatupper$, the density function is convex on $(\xhatupper, \infty)$, and so for any $x$ in this region, $F_{Y(t)}(x)$ decreases monotonically for $t\in [0,1]$. Since $Y(0) = \qmu - \mathbb{E}[\qmu]$ and $Y(1) = \qlam - \mathbb{E}[\qlam]$, the desired upper-tail inequality follows. 

        The lower-tail bound follows by symmetry. 		
	\end{proof}

    \subsection{Absolute error tail results} \label{apx:abs_tail}
    \begin{proof}[Proof of Theorem \ref{thm:abs:possible_worst_case}]
        Let $\qlam = \sum_{i=1}^r\hat{\lambda} X_i$, where $X_i\simiid Gamma(\alpha, \beta)$, $r = \lceil \phi^2/(\lambda^2\alpha)\rceil$ and $\hat{\lambda} = \frac{\beta\phi}{\sqrt{r\alpha}}$. This is a valid choice of distribution since 
        \begin{equation*}
            \var[\qlam] = r\hat{\lambda}^2\var[X_1] = r\left(\frac{\beta^2\phi^2}{r\alpha}\right)\frac{\alpha}{\beta^2} = \phi^2
        \end{equation*}
        and
        \begin{equation*}
            \scale(\qlam) = \frac{\hat{\lambda}}{\beta} = \frac{\phi}{\sqrt{r\alpha}} \leq \frac{\phi}{\sqrt{\phi^2/\lambda^2}} = \lambda.
        \end{equation*}
        Then $\mathbb{E}[\qlam] = r\hat{\lambda}\frac{\alpha}{\beta} = \phi\sqrt{r\alpha}$, and the \rv~$\qlam + \lambda_1\psi + \lambda_2\psi'$ has distribution $Gamma(r\alpha + 2, \beta \hat{\lambda}^{-1})$, or equivalently, $Gamma(r\alpha + 2, \sqrt{r\alpha}/\phi)$.

        Now in general, a $Gamma(\alpha, \beta)$ random variable has density proportional to $x^{\alpha - 1}e^{-\beta x}$ for $x\geq 0$ (see \eqref{eqn:gammaPDF}). The inflection points are where the second derivative is equal to zero. Since
        \begin{align*}
            \frac{\mathrm{d}^2}{\mathrm{d}x^2}\, x^{\alpha - 1}e^{-\beta x} &= \frac{\mathrm{d}}{\mathrm{d}x}\,(-\beta x^{\alpha -1} + (\alpha - 1)x^{\alpha-2})e^{-\beta x}\\
              &= (\beta^2x^{\alpha-1}-2\beta(\alpha-1)x^{\alpha-2}+(\alpha-1)(\alpha-2)x^{\alpha-3})e^{-\beta x}\\
              &= ((\beta x)^2 - 2(\alpha-1)(\beta x) + (\alpha-1)(\alpha-2))x^{\alpha-3}e^{-\beta x},
        \end{align*}
        the nontrivial inflection points are the two zeros of the quadratic term. The larger of the two zeros is given by
        \begin{equation*}
            \hat{x}_+ = \beta^{-1}\frac{2(\alpha-1) + \sqrt{4(\alpha-1)^2-4(\alpha-1)(\alpha-2)}}{2} = \frac{\alpha - 1 + \sqrt{\alpha - 1}}{\beta}.
        \end{equation*}
        Substitute for $\alpha$ and $\beta$ the values $r\alpha + 2$ and $\sqrt{r\alpha}/\phi$, and subtract $\mathbb{E}[\qlam]$ to get
        \begin{equation}\label{eqn:abs:pessimistic_bound}
            \xhatupper \geq \frac{r\alpha + 1 + \sqrt{r\alpha +1}}{\sqrt{r\alpha}/\phi} - \phi\sqrt{r\alpha} = \phi \frac{1 + \sqrt{r\alpha + 1}}{\sqrt{r\alpha}}.
        \end{equation}
    \end{proof}

    Conjecture \ref{conj:abs:possible_worst_case} is motivated by the fact that the right-hand-side of \eqref{eqn:abs:pessimistic_bound} is bounded above as 
    \begin{equation*}
        \phi \frac{1 + \sqrt{r\alpha + 1}}{\sqrt{r\alpha}}
        \leq
        \phi \frac{1 + \sqrt{\phi^2/\lambda^2 + 1}}{\sqrt{\phi^2/\lambda^2}}
        = \lambda + \sqrt{\phi^2+\lambda^2}.
    \end{equation*}

	\bibliography{references.bib}
	\bibliographystyle{plain}

\end{document}